\documentclass[smallextended]{svjour3}       
\usepackage{amsfonts}

\usepackage{latexsym}
\usepackage{amssymb}
\usepackage{amsmath}
\usepackage[mathscr]{eucal}
\usepackage{graphicx}
\usepackage{hyperref}
\usepackage{caption}
\usepackage{subcaption}
\usepackage[bottom=4cm, right=4cm, left=4cm, top=4cm]{geometry}

\renewcommand{\qed}{\hfill{\ \ \rule{2mm}{2mm}} \vspace{0.2in}}

\newcommand{\ind}{1\hspace{-2.3mm}{1}}

\renewcommand{\thefigure}{\arabic{figure}}
\begin{document}

\title{Constrained Maximum Weight Paths in Random Geometric Graphs}
\titlerunning{Maximum Weight Paths in RGGs}

\author{ \textbf{Ghurumuruhan Ganesan}}
\authorrunning{G. Ganesan}
\institute{IISER Bhopal\\
\email{gganesan82@gmail.com }}

\date{}
\maketitle

\begin{abstract}
In this paper, we consider a random geometric graph (RGG)~\(G\) formed by~\(n\) vertices distributed uniformly in the unit square~\(S\) on the plane and equip each edge of~\(G\) with an independent weight. We assume that the adjacency distance between vertices is larger than the connectivity threshold and estimate the maximum weight of a path connecting two fixed points~\(O_1\) and~\(O_2\) in~\(S,\) using edges of~\(G\) that  satisfy length and weight constraints. Our strategy is to first divide the space between~\(O_1\) and~\(O_2\) into small squares and identify nice squares containing heavy edges. We then use an iterative stitching procedure to connect these heavy edges and estimate the weight of the resulting path~\(P.\) Finally, we invoke a multi-level scaling procedure along with weight segmentation to establish  an upper bound for the maximum weight of \emph{any} path between~\(O_1\) and~\(O_2\) and thereby demonstrate the near-optimality of~\(P.\) We also illustrate our results using examples involving edge weights satisfying power law and exponential decay.

\vspace{0.1in} \noindent \textbf{Key words:} Maximum Weight Paths; Random Geometric Graphs; Independent Edge Weights; Edge Constraints;

\vspace{0.1in} \noindent \textbf{AMS 2000 Subject Classification:} Primary: 60C05;
\end{abstract}

\bigskip

\renewcommand{\theequation}{\thesection.\arabic{equation}}
\setcounter{equation}{0}
\section{Introduction} \label{intro}
Random Geometric Graphs (RGGs) are natural models for wireless networks~\cite{goldsmith} and have been studied in great detail from both theoretical~\cite{penrose}~\cite{penrose2} and application perspectives~\cite{gupta}~\cite{castro}. More recently, first passage percolation have also been studied in~\cite{hirsch}~\cite{lima} by considering edge weights that are proportional to the Euclidean length and shape theorems have been obtained for the ``progress" of such a spatial process.

Recently,~\cite{ganesan}~\cite{ganesan2}, has considered a variant of RGGs that are equipped with \emph{independent} edge weights. Indeed,~\cite{ganesan2} obtains bounds for the minimum weight of a spanning tree containing all the edges, when the adjacency distance is larger than the connectivity threshold and~\cite{ganesan}, estimates the maximum weight of a path between two fixed points in the unit square, when the edges have \emph{exponential} weights.

In this paper, we extend the study of maximum weight paths in RGGs for edge weights with general distributions. We consider  two fixed points~\(O_1\) and~\(O_2\) in the unit square and use the edges of an RGG~\(G\) for connecting~\(O_1\) and~\(O_2.\)  We  first use iteration techniques to construct an explicit path of large weight between~\(O_1\) and~\(O_2,\) consisting only of edges satisfying a minimum length and weight constraint.  We then invoke a multi-scaling argument to demonstrate the near optimality of our bound and illustrate  with examples involving edge weights satisfying exponential and power law decay.

The paper is organized as follows: In Section~\ref{sec_main}, we present our main results, Theorems~\ref{thm_path} and~\ref{thm_path2}, regarding the maximum weight of a path between two fixed points, formed by edges of an RGG with length and weight constraints. We also state Corollary~\ref{cor_one} where we illustrate the bounds obtained in the Theorems using examples of edge weights satisfying power law and exponential decay. Following that in Section~\ref{sec_prelim}, we collect preliminary results used in the proofs of our main Theorems. In Sections~\ref{sec_proof_a} and~\ref{sec_proof_b}, we prove Theorems~\ref{thm_path} and~\ref{thm_path2}, respectively and finally in Section~\ref{sec_proof_cor}, we prove Corollary~\ref{cor_one} using the estimates derived in Theorems~\ref{thm_path}-\ref{thm_path2}.

\renewcommand{\theequation}{\thesection.\arabic{equation}}
\setcounter{equation}{0}
\section{Main Results} \label{sec_main}
Let~\(K_n\) be the complete graph with vertex set~\(\{1,2,\ldots,n\}\) and let~\(\{X_v\}_{1 \leq i \leq n}\) be independent and identically distributed (i.i.d.) with a common density~\(f\) in the unit square~\(S = \left[-\frac{1}{2},\frac{1}{2}\right]^2\) satisfying
\begin{equation}\label{f_eq}
\epsilon_1 \leq f(x) \leq \epsilon_2
\end{equation}
for all~\(x \in S\) and some positive finite constants~\(\epsilon_1,\epsilon_2.\) We define~\(X_u\) to be the random \emph{location} of the vertex~\(u.\)

Let~\(K_{n}^{(2)}\) be the graph with vertex set~\(\{1,2,\ldots,n\} \cup \{O_1,O_2\}\) where~\(O_1\) and~\(O_2\) are deterministic vertices with locations~\(X_{O_1} = (0,0)\) (the origin) and~\(X_{O_2} = (\zeta_n,0) \in S,\) respectively, with~\(0 < \zeta_n  < \frac{1}{2}.\) The edge set of~\(K_n^{(2)}\) is obtained by connecting every vertex of~\(K_n\) with~\(O_1\) and with~\(O_2;\)  i.e., the edge set of~\(K_n^{(2)}\) is the union of the edge set of~\(K_n\) and~\(\{(O_1,j),(O_2,j)\}_{1 \leq j \leq n}.\) Here and henceforth we use the notation~\((u,v)\) to represent an edge with endvertices~\(u\) and~\(v.\)

For a deterministic sequence~\(0 < r_n < 1,\) let~\(G = G(r_n)\) be the random subgraph of~\(K_n^{(2)}\) formed by the set of all edges~\(h = (u,v)\) satisfying
\begin{equation}\label{rgg_def}
d(X_u,X_v) < r_n,
\end{equation}
where~\(d(a,b)\) represents the Euclidean distance between points~\(a\) and~\(b.\) We define~\(G\) to be the  Random Geometric Graph (RGG) with adjacency distance~\(r_n.\)

We equip the edges of~\(G\) with independent random weights as follows.  Let~\(\{W(h)\}_{h \in K_n^{(2)}}\) be positive i.i.d.\ random variables, indexed by the edge set of~\(K_n^{(2)},\) that are independent of all random variables defined so far. We refer to~\(W(h)\) as the \emph{weight} of edge~\(h\)  and define the complementary cumulative distribution function (ccdf)~\(F_c\) as
\begin{equation}\label{cdf_def}
F_c(x) := \mathbb{P}(W(h) > x)
\end{equation}
for~\(x > 0.\) For~\(0 < z < 1,\)  we define
\begin{equation}\label{h_def}
H(z) := \max\left\{x > 0 : F_c(x) \geq \frac{1}{z}\right\}
\end{equation}
to be the inverse cdf. Throughout, we assume that~\(0 < W(h) < \infty\) a.s.\ so that~\(0 < H(z) < \infty\) for all~\(0 < z < 1.\)


Our  goal is to connect the deterministic vertices~\(O_1\) and~\(O_2\) by paths in the RGG~\(G,\) having large weight. Formally, a path from~\(O_1\) to~\(O_2\) in~\(G\) is a sequence of vertices \[\Pi := (v_0 = O_1, v_1,v_2, \ldots, v_{t-1}, v_t = O_2)\] such that for each~\(0 \leq i \leq t-1,\) the edge~\((v_i,v_{i+1})\) is present in~\(G.\) We define the number of edges~\(t\) in~\(\Pi\) to be the \emph{length} of~\(\Pi\) and also denote~\(O_1\) and~\(O_2\) to be the endvertices of~\(\Pi.\)
\begin{definition}\label{def_one} For deterministic sequences~\(l_n,s_n,w_n \geq 0,\) we say that~\(\Pi\) is a~\((l_n,q_n,w_n)-\)path in~\(G\) if:\\
\((i)\)~\(\Pi\) contains at most~\(l_n\) edges and has~\(O_1\) and~\(O_2\) as endvertices,\\
\((ii)\) The Euclidean length of each edge in~\(\Pi\) is at least~\(q_n;\) i.e.,
\[\min_{0 \leq i \leq t-1} d(X_{v_i},X_{v_{i+1}}) \geq q_n\] and\\
\((iii)\) The weight of each edge in~\(\Pi\) is at least~\(w_n;\) i.e.,
\[\min_{0 \leq i \leq t-1} W(v_i,v_{i+1}) \geq w_n.\]
We define~\[W(\Pi) := \sum_{i=1}^{t-1}W(v_i,v_{i+1})\] to be the weight of~\(\Pi.\)
\end{definition}
In words, we seek the paths joining~\(O_1\) and~\(O_2,\) each of whose edges has length at least~\(s_n\) and weight at least~\(w_n.\)


Letting~\(M_n = M_n(l_n,s_n,w_n)\) denote the maximum weight of a~\((l_n,s_n,w_n)-\)path in~\(G\) (with the notation that the maximum of an empty set is zero), we have the following result. Throughout constants do not depend on~\(n\) and for two sequences~\(\{a_n\}\) and~\(\{b_n\},\) we use the notation~\(a_n = o(b_n)\) to denote that~\(\frac{a_n}{b_n} \rightarrow 0\) as~\(n \rightarrow \infty.\)
\begin{theorem}\label{thm_path} Let~\(\epsilon_1,\epsilon_2\) be as in~(\ref{f_eq}) and suppose
\begin{equation}\label{rn_cnd}
r_n \geq \frac{1}{n^{\beta}},\;\;\;\;q_n = \frac{1}{n^{c}}\;\;\;\text{ and }\;\;\; L_n := \frac{\zeta_n}{r_n} \geq 1,
\end{equation} for some constants~\(0 < \beta  < c < \frac{1}{2},\) strictly. Also suppose that the inverse cdf~\(H(.)\) is strictly increasing for all large~\(z.\) For every~\(\gamma,\kappa > 0\) and all~\(0< \varepsilon < 1\) small, there are constants~\(\theta_1,\theta_2  >0\) such that
\begin{equation}\label{weight_path_low}
\mathbb{P}\left(M_n(\theta_1 L_n,q_n,w_n) \geq \theta_2 L_n \alpha_n \right) \geq 1-\frac{1}{n^{1+\gamma}},
\end{equation}
where~\(w_n := H\left(N^{1-\varepsilon}\right),\;\;N := nr_n^2\) and
\[\alpha_n := \left\{\begin{array}{ll}
    H\left(\frac{N^2}{(\log{n})^{1+\kappa}}\right), & L_n \leq (\log{n})^{1+\kappa} \\
     & \\
    H(N^2),  &  \text{ otherwise.}
  \end{array}
  \right.\]
\end{theorem}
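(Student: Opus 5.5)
The idea is to build an explicit path by chaining heavy edges found in a sequence of small squares placed along the segment \(O_1O_2\). I would tile the strip around the segment from \(O_1\) to \(O_2\) by about \(L_n = \zeta_n/r_n\) consecutive squares of side \(\delta r_n\), with \(\delta>0\) a small constant, so that any two points in adjacent squares are within distance \(r_n\). Since \(r_n \geq n^{-\beta}\) with \(\beta<\frac{1}{2}\), we have \(N = nr_n^2 \geq n^{1-2\beta}\), a positive power of \(n\). By (\ref{f_eq}) and a standard Chernoff bound, every such square contains between \(\frac{\epsilon_1\delta^2}{2}N\) and \(2\epsilon_2\delta^2 N\) vertices with probability at least \(1-e^{-cN}\). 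A union bound over at most \(n\) squares then gives failure probability \(o(n^{-1-\gamma})\).

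Next I define a square to be \emph{nice} if it contains an edge \(h=(u,v)\) with both endpoints inside it, \(d(X_u,X_v)\geq q_n\), and \(W(h)\geq \alpha_n\). The length condition costs little. Since \(q_n = n^{-c}\) with \(c>\beta\), we have \(q_n = o(r_n)\), and a Chernoff bound shows that the pairs at distance at least \(q_n\) make up all but a vanishing fraction of the roughly \(N^2\) pairs in the square. Conditional on the locations, the weights are i.i.d. Each pair independently has \(W\ge H(z)\) with probability about \(1/z\), reading the definition of \(H\) as the inverse of the tail. Hence the number of heavy long edges in a square is approximately Binomial\((\Theta(N^2), 1/z)\).

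The two regimes for \(\alpha_n\) come from how many nice squares we need and how likely each one is. If \(L_n > (\log n)^{1+\kappa}\), I take \(z=N^2\), so each square is nice with probability at least some constant \(p_0>0\). The squares use disjoint edge sets, so their niceness is independent, and a Chernoff bound over the \(L_n\) squares gives at least \(p_0L_n/2\) nice squares. The failure probability \(e^{-cL_n}\le e^{-c(\log n)^{1+\kappa}}\) is \(o(n^{-1-\gamma})\). If \(L_n\le (\log n)^{1+\kappa}\), I take \(z=N^2/(\log n)^{1+\kappa}\). Then the expected number of heavy edges in a square is of order \((\log n)^{1+\kappa}\), and a Binomial lower-tail bound makes each square nice with probability at least \(1-e^{-c(\log n)^{1+\kappa}}\). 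A union bound then shows that every square is nice. Either way we obtain \(\Theta(L_n)\) disjoint heavy edges, each of weight at least \(\alpha_n\).

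The stitching step, which I expect to be the main obstacle, joins consecutive heavy edges using only edges of length at least \(q_n\) and weight at least \(w_n = H(N^{1-\varepsilon})\). Each vertex has order \(N\) neighbours in the next square. At least \(N(1-o(1))\) of these are at distance at least \(q_n\), and each such edge independently has weight at least \(w_n\) with probability about \(N^{-(1-\varepsilon)}\). So the number of admissible connecting edges from a given vertex into a given square is roughly Binomial with mean \(N^{\varepsilon}\). The probability of having none is \(e^{-cN^\varepsilon}\), which is superpolynomially small because \(N\) is a power of \(n\). I would then stitch iteratively: from the endpoint \(v\) of the heavy edge in the current nice square, step through intermediate squares by choosing at each step any neighbour in the next square joined by an admissible edge, until reaching the next nice square. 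Entering that square's heavy edge needs a two-hop connection through an intermediate vertex, which has the same estimate. The vertex \(O_1\) and the final step to \(O_2\) are handled the same way.

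The delicate point is independence. Edges already revealed when a heavy edge was selected must not be reused in the stitching, so at each step I would expose only fresh edges, for example those joining the current vertex to the next square. Because each step needs only one success among \(N^\varepsilon\) expected candidates, a union bound over all \(O(nL_n)\) steps still leaves probability \(1-o(n^{-1-\gamma})\). The resulting path has \(O(L_n)\) edges, which fixes \(\theta_1\). It contains \(\Theta(L_n)\) heavy edges, so its weight is at least \(\theta_2 L_n\alpha_n\). Every edge has length at least \(q_n\), and every edge has weight at least \(w_n\): stitching edges by construction, and heavy edges because \(\alpha_n\ge w_n\) for large \(n\), using that \(H\) is increasing. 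This gives (\ref{weight_path_low}).
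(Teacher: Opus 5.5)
There is a genuine gap in the stitching step, at the place you flagged as the main obstacle. Stepping from the current vertex to \emph{some} vertex of the next square is fine, since there are about $N^{\varepsilon}$ admissible candidates. Entering the heavy edge $h=(c,d)$, however, requires reaching the \emph{prescribed} vertex $c$, and the final step requires reaching the prescribed vertex $O_2$. Your claim that a two-hop connection through an intermediate vertex ``has the same estimate'' is false. For fixed $x$ and $c$, the number of vertices $y$ with both $(x,y)$ and $(y,c)$ admissible is roughly Binomial with $\Theta(N)$ trials and success probability $p^2=N^{-2(1-\varepsilon)}$. Its mean is therefore of order $N^{2\varepsilon-1}\to 0$ whenever $\varepsilon<\frac12$, which is the regime the theorem concerns. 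Put differently, the admissible neighbourhoods of $x$ and $c$ each have size about $N^{\varepsilon}$ among about $N$ vertices, so they are typically disjoint. Your greedy walk carries only one current vertex at a time, so stepping through intermediate squares never creates the flexibility needed to hit $c$.

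What is missing is a neighbourhood-growth argument. Inside a square, the set of vertices reachable from $x$ within $l$ admissible hops has size about $N^{l\varepsilon}$ as long as $l\varepsilon<1$. Once $(l+1)\varepsilon>1$, the quantity $N^{l\varepsilon}\cdot N^{\varepsilon-1}$ is a positive power of $n$. From that point on, every remaining vertex of the square is joined to the reachable set except with superpolynomially small probability. Hence about $K\approx 1/\varepsilon$ hops are needed, not two.

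This is the paper's Step~2. It shows that with probability $1-n^{-1-\gamma}$ the following holds for every square $W_i$ and every set ${\cal Y}$ of at most $K+1$ vertices: the nice-edge subgraph of $W_i$ with ${\cal Y}$ deleted has diameter at most $K$. Deleting ${\cal Y}$ is what lets you route $x\to c$ and $d\to$ (exit vertex) along vertex-disjoint subpaths, so that the concatenation is a genuine path. Each square then contributes at most $2K+1$ edges, which still gives $O(L_n)$ edges in total.

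Making this growth rigorous is where the real work lies. You condition on the explored levels and show that no unexplored vertex has more than a constant $M$ neighbours in the current level, so each level grows by a factor $d_{low}/M$. You must also keep the explored sets of size $o(N)$ and ensure they dominate the $O(nq_n^2)$ short edges; this is where $c>\beta$ and the choice of $K$ in terms of $\varepsilon$ enter. Stating the conclusion as one uniform high-probability event also removes the need for your sequential ``fresh edge'' exposure, because it can simply be intersected with the heavy-edge and cross-edge events.

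The remaining parts of your proposal match the paper's Step~1: the vertex counts, the negligible fraction of short pairs, the two regimes for $\alpha_n$, and the one-hop cross edges.
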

Since the length of any edge in~\(G\) is at most~\(r_n,\) we see that any path in~\(G\) connecting the vertices~\(O_1\) and~\(O_2\) that are~\(\zeta_n\) apart, must contain at least~\(L_n = \frac{\zeta_n}{r_n}\) edges. The above result essentially states that with high probability, i.e., with probability~\(1-o(1),\) there is a path between~\(O_1\) and~\(O_2\) having length at most of the order of~\(L_n\) and with weight least of the order of~\(L_n \alpha_n.\)  In this context, the term~\(\alpha_n\) could be interpreted as the gain obtained in choosing the maximum weight path as opposed to simply selecting a path according to a deterministic rule.

The following result complements Theorem~\ref{thm_path} by obtaining an upper bound for the maximum weight of a  path between the deterministic vertices~\(O_1\) and~\(O_2,\) having near optimal length, i.e., containing of the order of~\(L_n\) edges.
\begin{theorem}\label{thm_path2} Let~\(\epsilon_1,\epsilon_2\) be as in~(\ref{f_eq}) and suppose~\(r_n \geq \frac{1}{n^{\beta}}\) and~\(L_n = \frac{\zeta_n}{r_n} \geq 1\) for constant~\(0 < \beta < \frac{1}{2},\) as in~(\ref{rn_cnd}). Also suppose there are constants~\(C_0,x_0 > 0\) and~\(s > 4,\) such that the edge weight ccdf~\(F_c\) satisfies
\begin{equation}\label{f_scale}
F_c(ax) \leq \frac{C_0}{a^s} F_c(x)
\end{equation}
for all~\(a > 1\) and~\(x > x_0.\) For every~\(\kappa, \lambda > 0,\) there is a constant~\(\mu > 0\) such that
\begin{equation}\label{weight_path_up}
\mathbb{P}\left(M_n(\lambda L_n,0,0) \leq \mu L_n \alpha_n (1+\nu_n) \right) = 1-o(1),
\end{equation}
where~\(\alpha_n, N = nr_n^2\) are as in Theorem~\ref{thm_path2} and
\begin{equation}\label{nu_def_ax}
\nu_n :=
\left\{
\begin{array}{ll}
\left(L_n \cdot \log{n}\right)^{2/s}, & \;\;\;L_n  \leq (\log{n})^{1+\kappa},\\
 &\\
\frac{(\log{n})^{2+2\kappa+2/s}}{L_n^{1-2/s}}, &\;\;\;\text{ otherwise}. \\
\end{array}
\right.
\end{equation}
\end{theorem}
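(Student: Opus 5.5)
Any path with at most \(\lambda L_n\) edges has at most \(\lambda L_n\) edge weights, so the task is to show that with high probability no such path in \(G\) has total weight much larger than \(L_n\alpha_n\). The obstacle is that the number of candidate paths is huge, and weights along different paths are correlated through shared edges. The plan is to split the weight range into levels, count the edges of each level near the segment \(O_1O_2\), and bound the combined contribution of all levels through the tail condition~(\ref{f_scale}).

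We first localize the geometry. Any path with at most \(\lambda L_n\) edges, each of length less than \(r_n\), stays inside the ball of radius \(\lambda L_n r_n = \lambda\zeta_n\) around \(O_1\). Tile this region by squares of side \(r_n/4\); there are \(O(L_n^2)\) of them. By~(\ref{f_eq}), a standard Chernoff bound and a union bound, each square contains at most \(C N\) vertices with probability \(1-o(1)\), since \(N = nr_n^2 \geq n^{1-2\beta}\) is polynomially large. Hence each vertex has at most \(CN\) neighbours. A path with at most \(\lambda L_n\) edges visits at most \(\lambda L_n\) squares, and each square has \(O(N^2)\) potential edges with an endpoint in it.

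Next comes weight segmentation. Fix the levels \(x_k := 2^k \alpha_n\) for \(k\geq 0\). Edges with weight at most \(\alpha_n\) contribute at most \(\lambda L_n\alpha_n\) in total. For \(k\geq 0\), call an edge heavy at level \(k\) if its weight lies in \((x_k, x_{k+1}]\). By~(\ref{f_scale}) and the definition of \(H\),
\[p_k := F_c(x_k) \leq C_0 2^{-ks} F_c(\alpha_n).\]
When \(L_n > (\log n)^{1+\kappa}\) we have \(F_c(\alpha_n)\approx N^{-2}\), while in the other case \(F_c(\alpha_n)\approx (\log n)^{1+\kappa}/N^2\). A path uses at most \(\lambda L_n\) edges, and each lies among the \(O(N^2)\) edges meeting one of the squares it visits. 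The number of level-\(k\) heavy edges the path can use is therefore bounded by a sum over at most \(\lambda L_n\) squares of the number of level-\(k\) heavy edges touching each square.

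This is where the multi-level scaling enters. Group the squares into blocks along the path, and control the heavy-edge count via a union bound over the sequences of squares a path can visit: there are at most \(L_n^2\cdot 9^{\lambda L_n}\) such lattice-animal-like sequences. The heavy-edge count over a fixed sequence is a sum of independent Bernoulli variables, with total number about \(\lambda L_n N^2\) and mean about \(\lambda L_n N^2 p_k \lesssim L_n 2^{-ks}\), up to a \((\log n)^{1+\kappa}\) factor in the first case. When \(L_n\) is large, Chernoff beats the \(9^{\lambda L_n}\) entropy, giving at most \(C L_n 2^{-ks}\) heavy edges at level \(k\) for small \(k\). When \(L_n\) is small, or \(k\) is large, the mean is tiny. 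We then bound the maximal count \(T_k\) by a moment/Markov bound, \(\mathbb P(T_k\geq m)\leq (\text{mean}\cdot e/m)^m\) times the entropy, and choose a cutoff level \(k^*\) with \(2^{k^*}\approx \nu_n\). Above \(k^*\) we show that no heavy edges exist anywhere in the \(O(L_n^2N^2)\) relevant edges. The relevant probability is
\[L_n^2N^2 F_c(\alpha_n\nu_n) \leq C L_n^2N^2 \nu_n^{-s}F_c(\alpha_n).\]
This is made \(o(1)\) by the choice \(\nu_n^s = (L_n\log n)^2\) in the first case and \(\nu_n^s = L_n^{2-s}(\log n)^{\dots}\)-type in the second. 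This is exactly how the exponent \(2/s\) arises, so matching it reproduces~(\ref{nu_def_ax}). Summing the levels gives
\[\sum_{k\le k^*} x_{k+1}\cdot C L_n 2^{-ks} \leq C' L_n\alpha_n,\]
which converges because \(s>1\). The top level contributes at most \(\lambda L_n\cdot \alpha_n\nu_n\) only in the crude bound, which produces the \((1+\nu_n)\) factor.

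The main obstacle is the middle regime: making the union bound over paths compatible with the Chernoff estimates at each level, while keeping the entropy \(9^{\lambda L_n}\) dominated. This requires \(s>4\) rather than \(s>1\), and we expect to use it to absorb the \(N^2\)-edges-per-square and \(L_n^2\)-squares factors. We would first try a two-scale version: blocks of \((\log n)^{1+\kappa}\) squares, where within-block counts are handled by the crude maximum bound and across blocks by Chernoff. If this fails, we would accept a worse constant and enlarge \(\mu\).
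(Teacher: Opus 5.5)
\textbf{There is a genuine gap: the per-level count you need does not follow from the estimate you actually write down.} Although you invoke multi-level scaling, that estimate is single-scale. With squares of side $r_n/4$, a fixed sequence of $\lambda L_n$ squares has level-$k$ mean $\mu_k\asymp L_n2^{-ks}$. But the entropy $9^{\lambda L_n}=e^{cL_n}$ is the same for every $k$. The bound $e^{cL_n}(e\mu_k/m)^m$ is small only if $m\log(m/(e\mu_k))\gtrsim L_n$. This forces $m\gtrsim L_n/k$, not $m\asymp L_n2^{-ks}$; the latter works only for $k$ bounded. With counts of order $L_n/k$, the sum $\sum_k 2^{k}\alpha_n L_n/k$ is not $O(L_n\alpha_n)$. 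A fixed block size, such as your $(\log n)^{1+\kappa}$ fallback, has the same defect.

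The missing idea is a coarse-graining scale that depends on the weight level, as in Cox--Gandolfi--Griffin--Kesten. For edges of weight about $j\alpha_n$, the paper uses $t$-squares of side $2tr_n$ with $t=\delta j^{s/2}$, chosen so that $t^2N^2F_c(2\theta_0j\alpha_n)=O(1)$. The path is then covered by a $t$-animal of only $O(L_n/t)$ squares, plus a shifted tiling to catch edges crossing square boundaries. So the entropy drops to $e^{O(L_n/t)}$ while the mean per square stays bounded. Lemma~\ref{lem_animal} then gives at most $O(L_n/j^{s/2})$ such edges on the optimal path, with failure probability about $e^{-cL_n/j^{s/2}}$. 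The hypothesis $s>4$ is used exactly to make $\sum_j (j+1)L_nj^{-s/2}$ finite. With your dyadic levels, the analogous sum $\sum_k 2^kL_n2^{-ks/2}$ would already converge for $s>2$. So $s>4$ is not about absorbing $N^2$ or $L_n^2$ factors.

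\textbf{Your derivation of $\nu_n$ in the case $L_n>(\log n)^{1+\kappa}$ also fails.} There $\nu_n$ may tend to zero. Your bound for the absence of edges heavier than $\alpha_n\nu_n$ is then $CL_n^2N^2\nu_n^{-s}F_c(\alpha_n)\approx CL_n^2\nu_n^{-s}$, which is large, not $o(1)$. Moreover, (\ref{f_scale}) only applies for $a>1$.

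In the paper, the error term is built from three weight ranges:
\begin{itemize}
\item[(i)] \emph{Low levels.} The level-dependent bound above is useful only while $L_n/j^{s/2}\ge\chi_n:=(\log n)^{1+\kappa}$, i.e.\ for $j\le j_{low}=(L_n/\chi_n)^{2/s}$. The superlogarithmic $\chi_n$ is what makes the union bound over up to $n$ levels work.
\item[(ii)] \emph{Top cutoff.} Above $j_{up}=(L_n\log n)^{2/s}$, a plain union bound over the $O(L_n^2N^2)$ edges in the box shows no such edges exist, with failure probability $O((\log n)^{-2})$.
\item[(iii)] \emph{Intermediate band} $j_{low}\le j\le j_{up}$. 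A global Chernoff count shows the whole box contains only $O(\chi_n^2)$ edges of weight at least $2\theta_0j_{low}\alpha_n$, each of weight $O(j_{up}\alpha_n)$. This contributes $O(\chi_n^2j_{up}\alpha_n)=O(L_n\alpha_n\nu_n)$, which is the origin of the second formula for $\nu_n$.
\end{itemize}
Only your treatment of the case $L_n\le(\log n)^{1+\kappa}$ matches the paper: there you bound the weight by $\lambda L_n$ times a maximal edge weight $O((L_n\log n)^{2/s}\alpha_n)$.
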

Thus we see that with high probability, any path containing order of~\(L_n\) edges, has weight at most of the order of~\(L_n\alpha_n(1+\nu_n).\)

We illustrate the above result with the following examples involving edge weights satisfying power law and exponential decay. 
\begin{corollary}\label{cor_one} Suppose that~\(r_n = \frac{1}{n^{\beta}}\) for some~\(0 < \beta < \frac{1}{2}.\) \\
\((a)\) Suppose the edge weight ccdf~\(F_c(x)\) is continuous for all large~\(x\) and satisfies the scaling relation~(\ref{f_scale}) for some~\(s > 4.\) Also suppose
\begin{equation}\label{nice_cond_ax}
q_n = r_n^{1+\varepsilon}\;\;\;\text{ and }\;\;\; L_n \geq (\log{n})^{b}
\end{equation}
for some constants~\(b > \frac{2(s+1)}{s-2}\) and~\(\varepsilon > 0.\) There are constants~\(\lambda_i> 0, i=1,2,3\) such that if~\(0 < \varepsilon < \lambda_1,\) then
\begin{equation}\label{dev_complete}
\mathbb{P}\left(\lambda_1L_n\alpha_n \leq M_n(\lambda_2 L_n,q_n,w_n) \leq  M_n(\lambda_2 L_n,0,0) \leq \lambda_3L_n\alpha_n\right)=1-o(1),
\end{equation}
where~\(L_n,\alpha_n\) and~\(w_n = w_n(\varepsilon)\) are as in Theorems~\ref{thm_path}-\ref{thm_path2}.\\
\((b)\) \emph{(Power Law Decay)} Suppose the edge weight ccdf~\(F_c\) is continuous and satisfies
\begin{equation}\label{f_power}
\frac{a_1}{x^s} \leq F_c(x) \leq \frac{a_2}{x^s}
\end{equation}
for some constants~\(a_1,a_2 > 0, s>4\) and all~\(x\) large. There are constants~\(\psi_i> 0, 1 \leq i \leq 4\) such that if~(\ref{nice_cond_ax}) holds for some~\(b > \frac{2(s+1)}{s-2}\) and~\(0 < \varepsilon < \psi_1,\) then
\begin{equation}\label{example_one}
\mathbb{P}\left(\psi_1 L_ng_n^2 \leq M_n\left(\psi_2 L_n,q_n,\psi_3g_n^{1-\varepsilon}\right) \leq M_n(\psi_2L_n,0,0) \leq \psi_4 L_ng_n^2\right) = 1-o(1),
\end{equation}
where~\(g_n := (nr_n^2)^{1/s}.\)\\
\((c)\) \emph{(Exponential Decay)} Suppose the edge weights are i.i.d.\ with a common ccdf~\(F_c(x) = \exp\left(-x^{\lambda}\right),\) for~\(x > 0\) and some constant~\(\lambda>0.\) There are constants~\(\kappa_i> 0, 1 \leq i\leq 4\) such that if~(\ref{nice_cond_ax}) holds for some~\(b > 2\) and~\(0 < \varepsilon < \kappa_1,\) then
\begin{equation}\label{example_two}
\mathbb{P}\left(\kappa_1 L_nh_n \leq M_n\left(\kappa_2 L_n,q_n,\kappa_3h_n\right) \leq M_n(\kappa_2L_n,0,0) \leq \kappa_4 L_nh_n\right) = 1-o(1),
\end{equation}
where~\(h_n := (\log{n})^{1/\lambda}.\)
\end{corollary}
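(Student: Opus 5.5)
The plan is to obtain all three parts of Corollary~\ref{cor_one} by plugging the hypotheses into Theorems~\ref{thm_path} and~\ref{thm_path2}. In every case one has $r_n = n^{-\beta}$ and $q_n = r_n^{1+\varepsilon} = n^{-\beta(1+\varepsilon)}$. Setting $c := \beta(1+\varepsilon)$, we get $\beta < c < \frac{1}{2}$ as long as $\varepsilon$ is small, so~(\ref{rn_cnd}) holds. The condition $L_n \geq (\log n)^b \geq 1$ gives $L_n \geq 1$. Moreover, once $b > 1+\kappa$ (with $\kappa$ chosen small), we are in the regime $L_n > (\log n)^{1+\kappa}$, so $\alpha_n = H(N^2)$ with $N = nr_n^2 = n^{1-2\beta}$.

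\textbf{Lower bound.} Theorem~\ref{thm_path} gives $M_n(\theta_1 L_n, q_n, w_n) \geq \theta_2 L_n \alpha_n$ with probability at least $1 - n^{-1-\gamma}$. The theorem requires $H$ to be strictly increasing for large $z$; I will deduce this from the continuity of $F_c$ together with $F_c > 0$ on large $x$. I will take $\lambda_2 := \theta_1$.

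\textbf{Upper bound.} Theorem~\ref{thm_path2} with $\lambda = \lambda_2$ gives $M_n(\lambda_2 L_n,0,0) \leq \mu L_n \alpha_n (1+\nu_n)$, where $\nu_n = (\log n)^{2+2\kappa+2/s}/L_n^{1-2/s}$. Since $L_n \geq (\log n)^b$, this is bounded by $(\log n)^{2+2\kappa+2/s-b(1-2/s)}$. The exponent is negative exactly when $b(s-2) > 2s + 2 + 2\kappa s$. Since $b > \frac{2(s+1)}{s-2}$ strictly, we can choose $\kappa>0$ small enough for this to hold, and then $\nu_n = o(1)$. The middle inequality $M_n(\cdot,q_n,w_n)\le M_n(\cdot,0,0)$ holds trivially, because the constraints only shrink the set of admissible paths. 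This proves part~(a).

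\textbf{Part (b).} From~(\ref{f_power}) we get $F_c(ax)/F_c(x) \leq (a_2/a_1)a^{-s}$, so~(\ref{f_scale}) holds and part~(a) applies. It remains to compute $H$. Solving $F_c(x) = 1/z$ with the power-law bounds gives $(a_1 z)^{1/s} \leq H(z) \leq (a_2 z)^{1/s}$ for large $z$. Hence $\alpha_n = H(N^2) \asymp N^{2/s} = g_n^2$ and $w_n = H(N^{1-\varepsilon}) \geq a_1^{1/s} g_n^{1-\varepsilon}$. Enlarging the weight threshold from $w_n$ down-adjusted to $\psi_3 g_n^{1-\varepsilon}$ with $\psi_3 \leq a_1^{1/s}$ means the threshold $\psi_3 g_n^{1-\varepsilon}$ is at most $w_n$, so every $(\cdot,q_n,w_n)$-path is also a $(\cdot,q_n,\psi_3 g_n^{1-\varepsilon})$-path. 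Monotonicity of $M_n$ in $w$ then transfers the lower bound.

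\textbf{Part (c).} Here $F_c(x) = e^{-x^\lambda}$ does \emph{not} satisfy~(\ref{f_scale}) uniformly in $a>1$ for any $s$ with a constant $C_0$, since $e^{-(a^\lambda-1)x^\lambda}$ is near $1$ when $a$ is near $1$. This is the main obstacle. I would first check whether a weaker form suffices: the proof of Theorem~\ref{thm_path2} presumably only uses~(\ref{f_scale}) for $a$ bounded away from $1$ (e.g.\ $a \geq 2$, in the weight-segmentation levels), where the bound does hold. Failing that, I would rerun the segmentation argument of Theorem~\ref{thm_path2} directly. With exponential tails the level sums decay geometrically and $\nu_n$ can be replaced by $(\log n)^{2+2\kappa}/L_n$, which is $o(1)$ when $b>2$; this matches the weaker hypothesis $b>2$ stated in part~(c).

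Granting this, the constants come from $H(z) = (\log z)^{1/\lambda}$. We get $\alpha_n = (2\log N)^{1/\lambda} \asymp h_n$, since $\log N = (1-2\beta)\log n$. We also get $w_n = ((1-\varepsilon)(1-2\beta)\log n)^{1/\lambda} \geq \kappa_3 h_n$ with $\kappa_3 := ((1-\varepsilon)(1-2\beta))^{1/\lambda}$. As in part~(b), monotonicity of $M_n$ in the weight threshold then yields~(\ref{example_two}).
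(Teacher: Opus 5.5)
Your parts (a) and (b) are correct and coincide with the paper's argument. You use Theorem~\ref{thm_path} for the lower bound and Theorem~\ref{thm_path2} with $\kappa$ chosen so that $b(s-2) > 2(s+1)+2\kappa s$, which makes $\nu_n=o(1)$. In (b) you use $(a_1z)^{1/s}\le H(z)\le (a_2z)^{1/s}$ together with monotonicity of $M_n$ in the weight threshold.

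The gap is in part (c), and it comes from a false claim. The ccdf $F_c(x)=e^{-x^{\lambda}}$ \emph{does} satisfy~(\ref{f_scale}), and it does so for every $s>0$. For $a>1$ and $x>x_0$,
\[
\frac{F_c(ax)}{F_c(x)} = e^{-(a^{\lambda}-1)x^{\lambda}} \le e^{-(a^{\lambda}-1)x_0^{\lambda}} \le \frac{C_0}{a^{s}}, \qquad C_0 := \sup_{a\ge 1} a^{s}e^{-(a^{\lambda}-1)x_0^{\lambda}}<\infty .
\]
The supremum is finite because this function of $a$ is continuous, equals $1$ at $a=1$, and tends to $0$ as $a\to\infty$. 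Near $a=1$ the right side of~(\ref{f_scale}) is close to $C_0\ge 1$, so a ratio close to $1$ is no obstruction at all.

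Because of this misdiagnosis, your (c) rests on \emph{granting} an unproved modification of Theorem~\ref{thm_path2}. That modification is geometric decay of the level sums, with $\nu_n$ replaced by $(\log n)^{2+2\kappa}/L_n$. As a result, the upper bound in~(\ref{example_two}) is not actually established.

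The missing step is short, and it is the paper's route. Since~(\ref{f_scale}) holds for every $s$, and $\frac{2(s+1)}{s-2}\downarrow 2$ as $s\to\infty$, given $b>2$ you can choose $s>4$ with $b>\frac{2(s+1)}{s-2}$ and apply part (a) verbatim. Your computation $H(z)=(\log z)^{1/\lambda}$, giving $w_n\asymp\alpha_n\asymp h_n$, then finishes the proof.

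Your first fallback would also work, since the proof of Theorem~\ref{thm_path2} invokes~(\ref{f_scale}) only with $a=j$ an integer. It would still need this choice of large $s$ to cover all $b>2$. Note that the paper's own display for this step drops the factor $x^{\lambda}$ in the exponent and lets the constant depend on $a$; the supremum bound above is the correct version.

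Finally, take $\kappa_3$ independent of $\varepsilon$, e.g.\ $\kappa_3=((1-2\beta)/2)^{1/\lambda}$, which works for all $\varepsilon<\frac{1}{2}$.
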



\emph{Proof Outline}: We prove  the lower bound in Theorem~\ref{thm_path} by explicit construction in three steps. In the first step, we split the space between the fixed points~\(O_1\) and~\(O_2\) into small squares whose side length is of the order of~\(r_n\) and identify good squares containing ``heavy" edges. Following that, in the second step, we  show that the diameter of the subgraph of~\(G\) formed by ``nice" edges satisfying the edge and length constraint is constant in any small square,  with high probability. Finally, in the third step, we join the heavy edges identified in Step~\(1\) by nice paths constructed in Step~\(2,\) to obtain the desired lower bound.

For the upper bound in Theorem~\ref{thm_path2}, we use a multi-level scaling argument together with weight segmentation. We split the edges into different categories depending on the interval in which their weight lies and use appropriate scaling for each category to estimate the weight of edges in the maximum weight path. Finally, we derive the bounds in Corollary~\ref{cor_one} as a direct consequence of the bounds established in our main Theorems~\ref{thm_path}-~\ref{thm_path2}.


\renewcommand{\theequation}{\thesection.\arabic{equation}}
\setcounter{equation}{0}
\section{Preliminaries} \label{sec_prelim}
The following Lemma describes sufficient conditions under which ccdf~\(F_c(.)\) and the inverse ccdf~\(H(.)\) satisfy the inverse property and is used throughout the paper.
\begin{lemma}\label{lemma_hz} The following properties hold:\\
\((a)\) If there exists~\(z_0 > 0\) such that~\(H(z)\) is strictly increasing for all~\(z > z_0,\) then for any~\(z > z_0\) we have
\begin{equation}\label{inverse_cdf_equality}
\mathbb{P}\left(W(f) > H(z)\right) = \frac{1}{z}.
\end{equation}
\((b)\) If there exists~\(x_0 > 0\) such that~\(F_c(x_0) > 0\) and~\(F_c(x)\) is continuous for all~\(x \geq x_0,\) then~\(H(z)\) is strictly increasing for all~\(z > \frac{1}{F_c(x_0)}.\)
\end{lemma}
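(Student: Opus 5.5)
The plan is to argue directly from the definition~(\ref{h_def}), using only two facts about the ccdf. First, \(F_c\) is nonincreasing. Second, \(F_c\) is right-continuous, since \(F_c(x) = \mathbb{P}(W(h) > x)\). A preliminary point is that the set \(A_z := \{x > 0 : F_c(x) \geq \frac{1}{z}\}\) is an interval with left endpoint \(0\), because \(F_c\) is monotone. I will write \(H(z)\) for its right endpoint \(\sup A_z\), which is finite by the standing assumption. The one delicate issue, which I expect to be the main obstacle, is whether this supremum is actually attained, i.e.\ whether \(F_c(H(z)) \geq \frac{1}{z}\) holds. Without that, the ``max'' in~(\ref{h_def}) is only a sup, and the equality~(\ref{inverse_cdf_equality}) could fail at a jump of \(F_c\).

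For part \((a)\), fix \(z > z_0\) and set \(x^* := H(z)\). \emph{Upper bound.} Every \(y > x^*\) lies outside \(A_z\), so \(F_c(y) < \frac{1}{z}\). Letting \(y \downarrow x^*\) and using right-continuity gives \(F_c(x^*) \leq \frac{1}{z}\). \emph{Lower bound.} Suppose instead that \(F_c(x^*) < \frac{1}{z}\). Choose \(z'' > z\) so close to \(z\) that \(F_c(x^*) < \frac{1}{z''} < \frac{1}{z}\). Then every \(y < x^*\) satisfies \(F_c(y) \geq \frac{1}{z} > \frac{1}{z''}\), so \(y \in A_{z''}\). On the other hand, \(x^* \notin A_{z''}\), and by monotonicity no point beyond \(x^*\) lies in \(A_{z''}\). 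Hence \(H(z'') = x^* = H(z)\) with \(z'' > z > z_0\), which contradicts strict monotonicity of \(H\) on \((z_0,\infty)\). Therefore \(F_c(H(z)) = \frac{1}{z}\), which is exactly~(\ref{inverse_cdf_equality}).

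For part \((b)\), take \(\frac{1}{F_c(x_0)} < z_1 < z_2\). Since \(F_c(x_0) > \frac{1}{z_1}\), we have \(x_0 \in A_{z_1}\), so \(x_1 := H(z_1) \geq x_0\). Moreover \(A_{z_1} \subseteq A_{z_2}\), so \(H(z_1) \leq H(z_2)\) trivially. To upgrade this to a strict inequality, I use continuity of \(F_c\) on \([x_0,\infty)\). The right-limit argument from \((a)\) gives \(F_c(x_1) \leq \frac{1}{z_1}\). Left-continuity, which now holds on \((x_0,\infty)\), gives \(F_c(x_1) \geq \frac{1}{z_1}\); in the case \(x_1 = x_0\) this is immediate from \(F_c(x_0) > \frac{1}{z_1}\). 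Hence \(F_c(x_1) \geq \frac{1}{z_1} > \frac{1}{z_2}\). By continuity at \(x_1\), there is \(\delta > 0\) with \(F_c(x_1+\delta) > \frac{1}{z_2}\). Thus \(x_1 + \delta \in A_{z_2}\), and so \(H(z_2) \geq x_1 + \delta > H(z_1)\), which proves strict monotonicity.
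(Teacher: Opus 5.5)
Your proposal is correct and follows essentially the same route as the paper. In (a) you get $F_c(H(z))\le \frac{1}{z}$ from right-continuity and the reverse inequality by exhibiting $z''>z$ with $H(z'')=H(z)$, just as the paper does with $z/(1-\varepsilon)$; in (b) your direct argument (left-continuity gives $F_c(H(z_1))\ge \frac{1}{z_1}$, then right-continuity pushes past $H(z_1)$) is the contrapositive of the paper's contradiction argument, and your separate treatment of the boundary case $H(z_1)=x_0$, where left-continuity is not assumed, is a detail the paper does not single out.
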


\emph{Proof of Lemma~\ref{lemma_hz}}: To prove~\((a),\) we let~\(z > z_0\) be arbitrary and  use the definition of~\(H(z)\) in~(\ref{h_def}) and the right continuity of the ccdf to get that~\[F_c(H(z)) = \mathbb{P}\left(W(f) > H(z)\right) \leq \frac{1}{z}.\] For the converse direction, we use the fact that~\(H(z)\) is strictly increasing for all~\(z > z_0,\) as mentioned in Lemma statement. This necessarily implies that  for any~\(z > z_0\) and  any~\(\varepsilon > 0,\)  we must have~\(F_c(H(z)) \geq \frac{1-\varepsilon}{z};\)  else we arrive at the contradictory relation~\(H(z) = H\left(\frac{z}{1-\varepsilon}\right).\) Combining the above, we get~(\ref{inverse_cdf_equality}) and this completes the proof of part~\((a)\) of the Lemma.

We prove part~\((b)\) by contradiction as follows.  Suppose there exists~\(u > z > z_0 := \frac{1}{F(x_0)}\) strictly such that~\(H(z) = H(u).\) Both~\(H(z)\) and~\(H(u)\) are necessarily at least~\(x_0\) and the right continuity of~\(F_c\) further implies that~\[ F_c(H(z)) = F_c(H(u)) \leq \frac{1}{u}.\] But since~\(\frac{1}{u} < \frac{1}{z}\) strictly, this implies that~\( F_c(H(z)) < \frac{1}{z}\) strictly and so  invoking the stronger continuity condition of the ccdf, we get that~\( F_c((1-\eta)H(z)) < \frac{1}{z}\) strictly, for all small~\(\eta> 0.\) This contradicts the definition of inverse ccdf in~(\ref{h_def}) and so~\(H(z)\) is strictly increasing for all~\(z > z_0.\) This completes the proof of the Lemma.~\(\qed\)

In our proof of Theorem~\ref{thm_path}\((b)\) below, we use an auxiliary result regarding the number of edges of given weight, ``located" in small squares contained within the unit square~\(S,\) described as follows. For integer~\(t\) satisfying
\begin{equation}\label{t_chic}
4 \leq t  \leq \frac{1}{100r_n},
\end{equation} split the unit square~\(S\) into disjoint~\(2tr_n \times 2tr_n\) squares~\(\{R_l\}_{1 \leq l \leq T_t}\)  as illustrated in Figure~\ref{fig_squares_ax} and assume for simplicity that~\(T_t := \frac{1}{4t^2r_n^2}\) is an integer; else we set the side length of~\(R_l\) to be~\(\frac{2tr_n}{A},\) where~\(A = A(n) \in [1,2)\) is chosen appropriately so that~\(\frac{A^2}{4t^2r_n^2}\) is an integer. This is possible since~\(t \leq \frac{1}{100r_n}\) and so
\begin{equation}\label{t_choice}
\frac{4}{4t^2r_n^2} - \frac{1}{4t^2r_n^2} = \frac{3}{4t^2r_n^2} \geq 1.
\end{equation}

\begin{figure}[tbp]
\centering
\includegraphics[width=3.5in, trim= 60 400 150 20, clip=true]{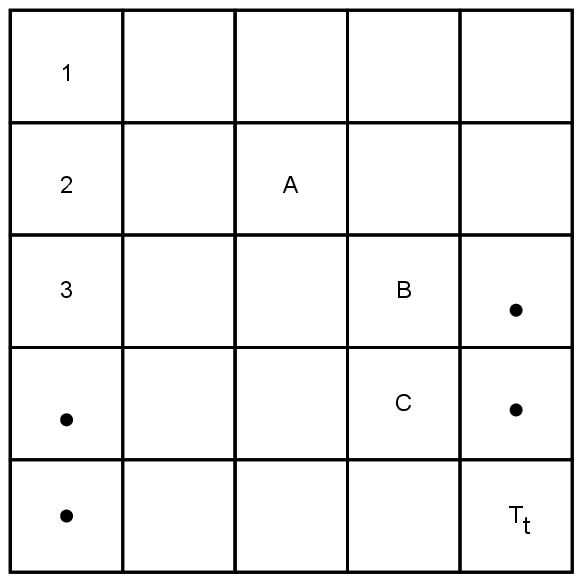}
\caption{Tiling the unit square into disjoint~\(2tr_n \times 2tr_n\) squares~\(\{R_l\}_{1 \leq l \leq T_t}.\) The square with label~\(l\) represents the~\(t-\)square~\(R_l\) and the~\(t-\)squares labelled~\(A,B\) and~\(C\) form a~\(t-\)animal of size~\(3.\)}
\label{fig_squares_ax}
\end{figure}

We define each~\(R_l\) to be a~\(t-\)\emph{square} and define a~\(t-\)\emph{animal} of size~\(Q\) to be a set~\[{\cal A} = \{B_i\}_{1 \leq i \leq Q} \subset \{R_l\}\] containing~\(Q\)~\(t-\)squares, whose union forms a connected set in the unit square~\(S.\) In Figure~\ref{fig_squares_ax},  the~\(t-\)squares labelled~\(A,B\) and~\(C\) form a~\(t-\)animal of size~\(3.\) We say that an edge~\((u,v)\) of~\(G\) is \emph{located} in the~\(t-\)square~\(B_l\) if both the endvertices~\(u\) and~\(v\) are located in~\(B_l;\) i.e.,~\(X_u,X_v \in B_l.\) We also say that~\({\cal A}\) contains the origin if some~\(t-\) square in~\({\cal A}\) contains the origin.

For~\(x > 0,\) let~\(N_{edge}(x,B_l)\) be the number of edges of~\(G\) located in~\(B_l,\) whose weight is at least~\(x\) and set
\[N_{edge}\left(x,{\cal A}\right) := \sum_{l=1}^{Q} N_{edge}(x,B_l)\] to be the total number of edges located in some~\(t-\)square of the~\(t-\)animal~\({\cal A},\) each with weight at least~\(x.\) Recalling that~\(N = nr_n^2\) (see statement of Theorem~\ref{thm_path}), we have the following Lemma.
\begin{lemma}\label{lem_animal} Suppose the conditions in Theorem~\ref{thm_path} hold and  let~\(F_c(.)\) be the edge weight ccdf. There are constants~\(\delta_1,\delta_2 > 0\) such that for any~\(x,y > 0\) and~\(Q \geq 1,\)
\begin{equation}
\mathbb{P}\left(\max_{\cal P} N_{edge}\left(x,{\cal P}\right) \geq y\right) \leq e^{\delta_1Q \Lambda} \cdot e^{-y} + e^{-\delta_2 N}, \label{tom_tom}
\end{equation}
where~\(\Lambda := \max(1,t^2N^2F_c(x))\) and the maximum is over all~\(t-\)animals of size~\(Q\) containing the origin.
\end{lemma}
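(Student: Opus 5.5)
We bound, for each fixed animal, the number of heavy edges by a Chernoff-type estimate, and then take a union bound over all animals containing the origin.

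\emph{Step 1 (vertex counts).} Let $E_{good}$ be the event that every $t-$square $R_l$ contains at most $C t^2 N$ vertices, for a suitable constant $C$. Each $R_l$ has area at most $4t^2r_n^2$, so by~(\ref{f_eq}) the number of vertices in $R_l$ is Binomial with mean at most $4\epsilon_2 t^2 N$. A standard Chernoff bound then gives $\mathbb{P}(\text{more than } Ct^2N \text{ vertices in } R_l) \leq e^{-2\delta_2 t^2 N}$. There are $T_t \leq r_n^{-2} \leq n$ squares and $t \geq 4$. Since $N = nr_n^2 \geq n^{1-2\beta}$ grows polynomially, a union bound yields $\mathbb{P}(E_{good}^c) \leq e^{-\delta_2 N}$ for all large $n$. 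This produces the second term in~(\ref{tom_tom}).

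\emph{Step 2 (fixed animal, conditional on locations).} Fix a $t-$animal ${\cal P}=\{B_i\}_{1\le i\le Q}$ and condition on the vertex locations, on $E_{good}$. The edge weights are independent of the locations. The number of edges located in ${\cal P}$ is at most $Q \binom{Ct^2N}{2} \leq C^2 Q t^4N^2$. Each such edge independently has weight at least $x$ with probability at most $F_c(x^-) $; I will treat this as $F_c(x)$, perhaps after a harmless adjustment of the event. Hence $N_{edge}(x,{\cal P})$ is stochastically dominated by a Binomial$(C^2Qt^4N^2, F_c(x))$ variable. Using $\mathbb{E}e^{Z} \leq \exp\left((e-1)\,\mathbb{E}Z\right)$ and Markov's inequality,
\[\mathbb{P}\left(N_{edge}(x,{\cal P}) \geq y \mid \text{locations}\right) \leq \exp\left((e-1)C^2 Q t^4 N^2 F_c(x)\right)e^{-y}.\]

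\emph{Step 3 (union over animals).} The number of animals of size $Q$ containing the origin is at most $e^{c_0 Q}$ by the standard lattice-animal count ($\le 8^Q$ with $*$-connectivity). Multiplying, the exponent is $c_0Q + (e-1)C^2Qt^4N^2F_c(x)$.

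This is the main obstacle: the exponent contains $t^4$, whereas $\Lambda$ only has $t^2$. I would first try to recover $t^2$ by a sharper count of edges, namely edges of $G$ located in a square. Any edge of $G$ has length below $r_n$. Partitioning $R_l$ into about $t^2$ subsquares of side $r_n$, each edge joins a subsquare to one of its boundedly many neighbours. On a strengthened good event where every $r_n\times r_n$ subsquare has at most $CN$ vertices, the number of edges per $t-$square is therefore $O(t^2N^2)$, not $O(t^4N^2)$. This strengthened event also fails with probability at most $e^{-\delta_2 N}$, since there are at most $n$ subsquares. The exponent then becomes $Q\left(c_0 + c_1 t^2N^2F_c(x)\right) \leq \delta_1 Q \Lambda$ with $\Lambda = \max(1, t^2N^2F_c(x))$.

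\emph{Conclusion.} Integrating the conditional bound over the locations on the good event and adding $\mathbb{P}(\text{good}^c) \le e^{-\delta_2 N}$ gives~(\ref{tom_tom}).
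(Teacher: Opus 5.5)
Your proposal is correct and follows essentially the paper's route: a high-probability event on vertex counts that gives $O(t^2N^2)$ edges per $t$-square, binomial domination with an exponential-moment (Chernoff) bound, a union bound over the $e^{CQ}$ animals containing the origin, and finally adding the failure probability of the good event. The differences are cosmetic. The paper gets the $t^2N^2$ edge count from a maximum-degree event ($E_{ball}$: every ball $B(X_u,r_n)$ holds $O(N)$ vertices) multiplied by the $O(t^2N)$ vertices per square, instead of your $r_n$-subsquare counts. Your conditioning on all vertex locations, rather than on the event $A$, makes the product bound over the squares of the animal cleaner. The $\geq x$ versus $>x$ issue you flag is also passed over in the paper.
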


In our proof of Lemma~\ref{lem_animal} below and throughout, we use the following deviation estimate regarding sums of independent Bernoulli random variables. Let~\(\{U_j\}_{1 \leq j \leq r}\) be independent Bernoulli random variables satisfying~\[\mathbb{P}(U_j = 1) = 1-\mathbb{P}(U_j = 0) > 0.\] If~\(V_r := \sum_{j=1}^{r} U_j, \theta_r := \mathbb{E}V_r\) and~\(0 < \gamma \leq \frac{1}{2},\) then
\begin{equation}\label{conc_est_f}
\mathbb{P}\left(\left|V_r - \theta_r\right| \geq \theta_r \gamma \right) \leq 2\exp\left(-\frac{\gamma^2}{4}\theta_r\right)
\end{equation}
for all \(r \geq 1.\) For a proof of~(\ref{conc_est_f}), we refer to Corollary A.1.14, pp. 312 of~\cite{alon}.

\emph{Proof of Lemma~\ref{lem_animal}}:  We begin with some preliminary computations regarding the number of vertices and edges located in a given~\(t-\)square. Formally, we say that vertex~\(u\) is located in~\(R_l\) if~\(X_u \in R_l.\) By the density bounds in~(\ref{f_eq}), we see that~\(X_u\) is located in the~\(2tr_n \times 2tr_n\) square~\(R_l\) with probability
\begin{equation}\label{gul_tip}
\int_{R_l} f \in [4\epsilon_1t^2r_n^2, 4\epsilon_2 t^2r_n^2],
\end{equation}
by~(\ref{f_eq}). The number of vertices~\(N(R_l)\) located in~\(R_l\) is Binomial with parameters~\(n\) and~\(\int_{R_l}f\) and so by the deviation estimate~(\ref{conc_est_f}), we then get that
\[\mathbb{P}\left(C_1t^2N \leq N(R_l) \leq C_2 t^2N\right) \geq 1-\exp\left(-2C_1 t^2N\right),\]
for some constants~\(C_1,C_2 > 0,\) where we recall from Lemma statement that~\(N = nr_n^2.\)

Defining
\[E_{sq} := \bigcap_{l=1}^{T_t} \left\{C_1 t^2N \leq N(R_l) \leq C_2 t^2N\right\}\] we get by an application of the union bound that
\begin{equation}\label{e_sq_ax}
\mathbb{P}(E_{sq}) \geq 1- T_t \exp\left(-2C_1t^2N\right).
\end{equation}
Using~\(t \geq 1,\) we get that the number of~\(t-\)squares~\(T_t\) satisfies \[T_t = \frac{1}{4t^2r_n^2} \leq \frac{1}{4r_n^2} \leq \frac{n^{2\beta}}{4},\] since~\(r_n \geq \frac{1}{n^{\beta}}\) by Theorem statement. Moreover,~\(t^2N \geq N = nr_n^2 \geq n^{1-2\beta} \rightarrow \infty\) and so plugging these into the final term of~(\ref{e_sq_ax}), we get that
\[T_t \exp\left(-2C_1t^2N\right) \leq \exp\left(-C_1t^2N\right) \leq \exp\left(-C_1N\right)\] for all~\(n\) large. Consequently,
\begin{equation}\label{e_vert_two_est}
\mathbb{P}(E_{sq}) \geq 1-e^{-C_1t^2N}.
\end{equation}

Next, to estimate the number of edges of~\(G\) located in~\(R_l,\) we first upper bound the degree of each vertex in~\(G.\) Let~\(B(X_u,r_n)\) be the ball of radius~\(r_n\) centred at the location~\(X_u\) of the vertex~\(u\)  so that the degree of vertex~\(u\) is precisely the number of vertices apart from~\(u,\) located in~\(B(X_u,r_n);\) i.e., the degree of~\(y\) in~\(G\) equals~\(N(B(X_u,r_n))-1.\) Given~\(X_u,\) we see that~\(N(B(X_u,r_n))\) is Binomial with parameters~\(n-1\) and~\[\int_{B(X_u,r_n)} f \in [\epsilon_1 \pi r_n^2, \epsilon_2 \pi r_n^2],\] again using the density bounds in~(\ref{f_eq}). Argue as in the discussion following~(\ref{gul_tip}) we then get that
\[\mathbb{P}\left(C_1 N \leq N(B(X_u,r_n)) \leq C_2 N \mid X_u\right) \geq 1-e^{-C_1N}\] for constants~\(C_1,C_2 > 0\) not depending on~\(X_u\) and so taking averages with respect to~\(X_u,\) we get
\[\mathbb{P}\left(C_1 N \leq N(B(X_u,r_n)) \leq C_2 N\right) \geq 1-e^{-C_1N}.\]

Defining
\[E_{ball} := \bigcap_{u=1}^{n} \left\{C_1 N \leq N(B(X_u,r_n)) \leq C_2 N\right\}\] and  arguing as in the derivation of~(\ref{e_vert_two_est})  above, we get that
\begin{equation}\label{e_ball_est}
\mathbb{P}(E_{ball}) \geq 1-e^{-C_1N}
\end{equation}
for all~\(n\) large. Finally, setting
\[A:= E_{sq} \bigcap E_{ball},\] we get from~(\ref{e_vert_two_est}),~(\ref{e_ball_est}) and the union bound that
\begin{align}
\mathbb{P}(A) &\geq 1- e^{-C_1t^2N} - e^{-C_1N} \nonumber\\
&\geq 1-2e^{-C_1N}, \label{p_a_est}
\end{align}
since~\(t \geq 2\) as set in~(\ref{t_chic}).

We assume henceforth that the event~\(A\) occurs and let~\(\mathbb{P}_A(.) := \mathbb{P}\left(. \mid A\right) \) be the distribution conditioned on the occurrence of the event~\(A.\) Because~\(E_{sq} \supset A\) occurs, the number of vertices located in the~\(t-\)square~\(R_l\) is at most~\(C_2t^2N\) and each such vertex is adjacent to at most~\(C_2N\) other vertices of~\(G,\) due to the occurrence of~\(E_{ball}.\) Consequently, the number of edges of~\(G\) located in~\(R_l\) is at most
\[C_2t^2N \cdot C_2 N = C_3 t^2N^2,\] where~\(C_3 = C_2^2 > 0\) is a constant. Each such edge has weight at least~\(x\) with probability~\(F_c(x),\) independent of other edges, by the definition of ccdf in~(\ref{cdf_def}). Thus~\(N_{edge}(x,R_l)\) is stochastically dominated from above by a Binomial random variable with parameters~\(C_3t^2N^2\) and~\(F_c(x)\) and so
\begin{align}
\mathbb{E}_A\left(\exp\left(N_{edge}(x,R_l)\right)\right) &\leq \left(1-F_c(x) + eF_c(x)\right)^{C_3t^2N^2} \nonumber\\
&\leq \exp\left(C_3(e-1)t^2N^2F_c(x)\right) \nonumber\\
&= \exp\left(C_4 t^2N^2F_c(x)\right), \label{dekha_tenu}
\end{align}
where~\(C_4 := C_3(e-1)\) is a constant.

If~\({\cal P} := \{B_i\}_{1 \leq i \leq Q}\) is any deterministic~\(t-\)animal of size~\(Q,\) then the corresponding random variables~\(\{N_{edge}(x,B_i)\}_{1 \leq i \leq Q}\) are~\(\mathbb{P}_A-\)independent and so
\begin{align}
\mathbb{E}_A\left(\exp\left(N_{edge}\left(x,{\cal P}\right)\right)\right) &= \prod_{i=1}^{Q} \mathbb{E}_A\exp\left(N_{edge}(x,B_i)\right) \nonumber\\
&\leq \exp\left(C_4t^2N^2QF_c(x)\right), \label{nasha_chadake}
\end{align}
by~(\ref{dekha_tenu}). The Chernoff bound implies that for~\(y > 0,\)
\begin{equation}\label{chenn}
\mathbb{P}_A\left(N_{edge}\left(x,{\cal P}\right) \geq y\right) \leq \exp\left(C_4t^2N^2QF_c(x)\right)e^{-y}.
\end{equation}


By standard estimates, the number of~\(t-\)animals of size~\(Q\) containing the origin is at most~\(e^{C_5Q}\) for some constant~\(C_5 > 0\) (see for e.g., the derivation of Eq.~\((4.24),\) pp.~\(76\) of~\cite{grimmett}).   Therefore invoking the union bound, we get from~(\ref{chenn}) that
\begin{align}\nonumber
\mathbb{P}_A\left(\max_{\cal P}N_{edge}\left(x,{\cal P}\right) \geq y\right) &\leq e^{C_5{Q}}\exp\left(C_4t^2N^2QF_c(x)\right) e^{-y} \nonumber\\
&\leq e^{C_6Q\Lambda}e^{-y}, \label{gilpin_tits}
\end{align}
for some constant~\(C_6 > 0,\) where~\(\Lambda = \max(1,t^2N^2F_c(x))\) is as in Lemma statement and  the maximum in~(\ref{gilpin_tits}) is over all~\(t-\)animals of size~\(Q\) containing the origin.

Finally, using
\begin{align}
\mathbb{P}(F) &= \mathbb{P}_A(F)\mathbb{P}(A) + \mathbb{P}(F \mid A^c) \mathbb{P}(A^c) \nonumber\\
&\leq \mathbb{P}_A(F) + \mathbb{P}(A^c) \label{pab_rel_two}
\end{align}
for any event~\(F\) and the estimate~(\ref{p_a_est}) for~\(\mathbb{P}(A^c),\) we get the desired bound~(\ref{tom_tom}) in the Lemma statement. This completes the proof of the Lemma.~\(\qed\)



\renewcommand{\theequation}{\thesection.\arabic{equation}}
\setcounter{equation}{0}
\section{Proof of Theorem~\ref{thm_path}} \label{sec_proof_a}
We begin with some preliminary constructions. Let~\(R_{path}\) be the rectangle with long side length~\(\zeta_n\) and short side length~\(\frac{r_n}{4},\) containing~\(A = (0,0)\) and~\(B = (\zeta_n,0)\) as corners. Tile~\(R_{path}\) into disjoint~\(\frac{r_n}{4} \times \frac{r_n}{4}\) squares~\(\{W_i\}_{1 \leq i \leq T}\) and assume for simplicity that~\(T = \frac{4\zeta_n}{r_n}\) is an integer; else we choose~\(4 < A = A(n) \leq 5\) in such a way~\(\frac{A\zeta_n}{r_n}\) is an integer and set~\(\frac{r_n}{A}\) to be the side length of each~\(W_i.\) This is possible since
\begin{equation}\label{a_choice}
\frac{5\zeta_n}{r_n} - \frac{4\zeta_n}{r_n} = \frac{\zeta_n}{r_n}  \geq 1,
\end{equation}
by Theorem statement. Also label the squares as shown in Figure~\ref{fig_squares} so that~\(W_i\) shares a common side with~\(W_{i+1}\) for~\(1 \leq i \leq T-1.\)

\begin{figure}[tbp]
\centering
\includegraphics[width=4.5in, trim= 20 500 50 110, clip=true]{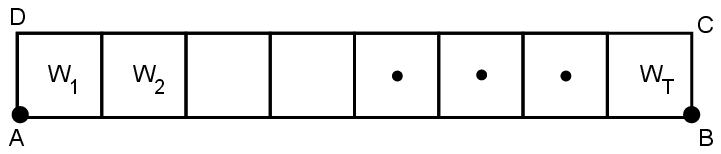}
\caption{Tiling the rectangle~\(R_{path} = ABCD\) with~\(A = (0,0)\) and~\(B = (\zeta_n,0),\) into~\(\frac{r_n}{4} \times \frac{r_n}{4}\) squares~\(\{W_i\}_{1 \leq i \leq T}.\)}
\label{fig_squares}
\end{figure}

For an \emph{irrational} constant~\(0 < \varepsilon < \frac{1}{4}\) to be determined later, say that an edge~\(h = (u,v)\) of~\(G\) is \emph{nice} if:\\
\((i)\) The length of~\(h\) is at least~\(q_n,\)\\
\((ii)\) The weight of~\(h\) is at least~\(w_n = H(N^{1-\varepsilon}),\) where~\(N := nr_n^2\) and\\
\((iii)\) There exists~\(1 \leq i \leq T\) such that both the endvertices of~\(h\) is present in~\(W_i;\) i.e., both~\(X_u\) and~\(X_v\) are in~\(W_i.\)

Similarly, say that an edge~\(h_{ab} = (a,b)\) of~\(G\) is a \emph{nice cross} edge if properties~\((i)-(ii)\) above hold and:\\
\((iv)\) There exists~\(1 \leq i \leq T-1\) such that one endvertex of~\(h_{ab}\) (say~\(a\)) is located in~\(W_i\) and the other endvertex~\(b\) is located in the adjacent square~\(W_{i+1}\) sharing a common side with~\(W_i.\)

Our strategy to establish the lower deviation bound in~(\ref{weight_path_low}) proceeds in three steps below.  In the first step, we define auxiliary events used in our proof and also show that a positive fraction of squares in~\(\{W_i\},\)  each contain a super heavy edge (formal definitions are provided in the analysis of Step~\(1\) below). Next, we demonstrate that with high probability, any two vertices in any~\(W_i\) can be connected together by a path of constant length, containing only nice edges. Finally,  we connect the super heavy edges using nice paths of constant length to obtain the desired deviation estimate in the Theorem statement. Details follow.



\emph{\underline{Step 1}}: For~\(1 \leq i \leq T,\) let~\({\cal V}(W_i)\) be the set of vertices located in the square~\(W_i\) and let~\(\Gamma_i\) be the induced subgraph of~\(G\) with vertex set~\({\cal V}(W_i),\) obtained by retaining only the nice edges. For~\(1 \leq i \leq T,\) let~\({\cal V}(W_i)\) be the set of vertices located in the square~\(W_i\) and let~\(\Gamma_i\) be the induced subgraph of~\(G\) with vertex set~\({\cal V}(W_i),\) obtained by retaining only the nice edges. In this step, we estimate the degree of each vertex in~\(\Gamma_i\) and define preliminary events used later in the construction of paths formed by nice edges.

The number~\(N(W_i) := \#{\cal V}(W_i)\)  of vertices of~\(\{X_j\}\) present in the square~\(W_i\) is Binomially distributed with parameters~\(n\) and
\begin{equation}\label{prob_bounds}
\int_{W_i} f \in \left[\frac{\epsilon_1 r_n^2}{16}, \frac{\epsilon_2 r_n^2}{16}\right],
\end{equation}
by the density bounds in~(\ref{f_eq}). Here and henceforth~\(\#A\) denotes the size of~\(A.\)  Consequently, the deviation estimate~(\ref{conc_est_f}) implies that
\begin{equation}\label{nwi_est}
\mathbb{P}\left(D_1 N \leq N(W_i) \leq D_2 N\right) \geq 1-\exp\left(-2D_1 N\right)
\end{equation}
for some constants~\(D_1,D_2> 0,\) where~\(N := nr_n^2.\)

Defining
\begin{equation}\label{e_vert_def}
E_{vert} := \bigcap_{i=1}^{T}\left\{D_1 N \leq N(W_i) \leq D_2 N\right\},
\end{equation}
we then get from~(\ref{nwi_est}) and the union bound that
\begin{equation}\label{e_vert_one}
\mathbb{P}(E_{vert}) \geq 1- T \cdot \exp\left(-2D_1 N\right).
\end{equation}
From Theorem statement
\begin{equation}\label{t_est}
N = nr_n^2 \geq  n^{1-2\beta} \rightarrow \infty\;\;\;\text{ and so }\;\;\;T = \frac{4\zeta_n}{r_n} \leq \frac{8}{r_n} \leq 2n
\end{equation}
for all~\(n\) large, where the penultimate inequality in~(\ref{t_est}) is true since the Euclidean distance between any two points in the unit square is at most~\(2.\) Plugging the bounds in~(\ref{t_est}) into~(\ref{e_vert_one}), we get that
\begin{align}\label{e_vert_est}
\mathbb{P}(E_{vert}) &\geq 1- 2n \cdot \exp\left(-2D_1 N\right) \nonumber\\
&\geq 1-\exp\left(-D_1 N\right) \nonumber\\
&= 1-o(1),
\end{align}
where the final two estimates in~(\ref{e_vert_est}) follow from the fact that~\(N = nr_n^2 \geq n^{1-2\beta} \rightarrow \infty,\) by Theorem statement.

For future reference, we remark here that if~\(E_{vert}\) occurs, then the following properties hold.\\
\(\textbf{(p1)}\) Any square~\(W_i\) contains at least~\(D_1N\) and at most~\(D_2N\) vertices.\\
\(\textbf{(p2)}\) Any two vertices in~\(W_i\) are adjacent in the graph~\(G.\)\\
\(\textbf{(p3)}\) Every vertex in~\(W_i\) is adjacent (in the graph~\(G\)) to at least~\(D_1N-1\) and at most~\(D_2N\) other vertices, in~\(W_i.\)\\
Indeed, property~\((p2)\) is true since the distance between any two points in~\(W_i\) is at most~\(r_n\) and the property~\((p3)\) follows from~\((p2).\)

To estimate the vertex degree in the smaller subgraph~\(\Gamma_i\) formed by nice edges, we use a couple of additional events. Say that an edge~\((u,v)\) of~\(G\) is \emph{long} if its length~\(d(X_u,X_v) > q_n = \varepsilon r_n\) and \emph{short} otherwise. To estimate the number of long edges adjacent to each vertex, let~\(1 \leq u \leq n\) be any vertex and define~\(B(X_u,q_n)\) to be the ball of radius~\(q_n\) centred at the location~\(X_u\) of vertex~\(u.\)

Irrespective of the location~\(X_u,\) at least one quadrant of~\(B(X_u,q_n)\) is completely contained within the unit square and so given~\(X_u,\) any vertex~\(X_j, j \neq u,\) is present in~\(B(X_u,q_n)\) with probability~\[\int_{B(X_u,q_n)}f \in \left[\frac{\epsilon_1 \pi q_n^2}{4}, \epsilon_2 \pi q_n^2\right],\] by the density bounds in~(\ref{f_eq}). Therefore,  the number~\(N(B(X_u,q_n))\) of vertices of~\(\{X_j\}\) present in~\(B(X_u,q_n)\) is stochastically dominated from above by a Binomial random variable with parameters~\(n\) and~\(\epsilon_2 \pi q_n^2\) and from below by a Binomial random variable with parameters~\(n\) and~\(\frac{\epsilon_1 \pi q_n^2}{4}.\) Using the deviation estimate~(\ref{conc_est_f}), we then get that
\begin{equation} \nonumber
\mathbb{P}\left(C_1nq_n^2 \leq N(B(X_u,q_n)) \leq C_2 nq_n^2 \,\middle \vert \, X_u\right) \geq 1-\exp\left(-C_1 nq_n^2\right),
\end{equation}
for some constants~\(C_1,C_2 > 0,\) not depending on~\(X_u,u\) or~\(\varepsilon.\) Taking averages with respect to~\(X_u,\) we obtain
\begin{equation}\nonumber
\mathbb{P}\left(C_1nq_n^2 \leq N(B(X_u,q_n)) \leq C_2n q_n^2\right) \geq 1-\exp\left(-C_1 nq_n^2\right)
\end{equation}
and further defining
\begin{equation}\label{e_short_def}
E_{short} := \bigcap_{u=1}^{n} \left\{C_1nq_n^2 \leq N(B(X_u,q_n)) \leq C_2 nq_n^2\right\},
\end{equation} we then get from the union bound that
\begin{align}\label{e_short_est}
\mathbb{P}(E_{short}) &\geq 1- n\cdot \exp\left(-C_1 nq_n^2\right) \nonumber\\
&= 1- n\cdot \exp\left(-C_1n^{1-2c}\right) \nonumber\\
&= 1-o(1),
\end{align}
since~\(q_n = \frac{1}{n^{c}}\) for some~\(0 < c < \frac{1}{2},\)  by Theorem statement.

From~(\ref{e_vert_est}),~(\ref{e_short_est}) and the union bound, we see that  the joint event~\(E_{vert} \cap E_{short}\) occurs  with probability
\begin{align}\label{e_vert_short_est}
\mathbb{P}\left(E_{vert} \cap E_{short} \right) &\geq 1- \exp\left(-D_1 N\right) - \exp\left(-C_1 n^{1-2c}\right) \nonumber\\
&\geq 1- 2\exp\left(-C_1 n^{1-2c}\right) \nonumber\\
&=1-o(1),
\end{align}
for all~\(n\) large, since~\[N = nr_n^2 \geq n^{1-2\beta} \geq n^{1-2c} \rightarrow \infty,\] by Theorem statement.

Suppose henceforth that~\(E_{vert} \cap E_{short}\) occurs and let~\(W_i, 1 \leq i \leq T\) be any square and let~\(u\) be any vertex located in~\(W_i;\) i.e.,~\(X_u \in W_i.\) From property~\((p2)\) above, we know that any two vertices in~\(W_i\) are adjacent in the graph~\(G\) and since~\(q_n = o(r_n)\) by Theorem statement, we see that~\(u\) is attached to at most
\begin{equation}\label{shortt_edges}
C_2nq_n^2 = o(N)
\end{equation} short edges in~\(G,\) i.e., edges of length at most~\(q_n.\) Moreover, from property~\((p3)\) above, we also have that~\(u\) is adjacent to at least~\(D_1N-1\) other vertices in~\(W_i\) in the graph~\(G.\)  Consequently, we deduce that~\(u\) is attached to at least~\[D_1N -1  - o(N)\] other vertices in~\(W_i\) by \emph{long} edges.

The above discussion implies that if~\(E_{vert} \cap E_{short}\) occurs, then there is a constant~\(D_0 > 0\) such that following properties hold for each square~\(W_i, 1 \leq i \leq T:\)\\
\(\textbf{(p4)}\) Any vertex~\(u\) located in~\(W_i\) is adjacent (in~\(G\)) to at least~\(D_0 N\) other vertices of~\(W_i,\) by long edges; i.e., edges of length at least~\(\varepsilon r_n.\)\\
\(\textbf{(p5)}\)  If~\(1 \leq i \leq T-1,\) then each vertex in~\(W_i\) is also adjacent to at least~\(D_0 N\) vertices by long edges, in the \emph{adjacent} square~\(W_{i+1}\) that shares a common side with~\(W_i,\) as shown in Figure~\ref{fig_squares}.

The next two events relate to the weight of edges present within the squares of~\(\{W_i\}.\) We begin with an  estimate on the number of  long edges attached to each vertex, that are also ``heavy". Recall that~\(q_n = \frac{1}{n^{c}}\) and~\(r_n \geq \frac{1}{n^{\beta}}\) for some constants~\(0 < \beta < c < \frac{1}{2}.\) Let~\(K\) be a large integer constant and~\(\frac{1}{K} < \varepsilon < \frac{1}{K-1}\) be an irrational constant satisfying
\begin{equation}\label{k_chce}
1-2c < (1-2\beta)\left(1-\frac{1}{K}\right) < (1-2\beta) \varepsilon (K-1).
\end{equation}
Say that an edge~\(f\) with endvertices~\(u\) and~\(v\) is \emph{heavy} if its weight~\[W(f) > w_n := H(N^{1-\varepsilon}).\]  From the relation~(\ref{inverse_cdf_equality}) of Lemma~\ref{lemma_hz}, we know that  any  edge of~\(G\) is heavy with probability
\begin{equation}\label{prob_est}
p := \mathbb{P}\left(W(f) > w_n\right) =  \frac{1}{N^{1-\varepsilon}},
\end{equation}
independently of other edges. By definition, any edge of~\(G\) that is long, heavy and has both endvertices in~\(W_i,\) is nice  and we  recall that~\(\Gamma_i \subset G\) is the subgraph obtained by retaining all nice edges of~\(G\) present in~\(W_i.\) We also recall that~\({\cal V}(W_i),\) the set of all vertices in~\(W_i,\) is the vertex set of~\(\Gamma_i.\)

Suppose as before that~\(E_{vert} \cap E_{short}\) occurs so that the square~\(W_i\) contains at least~\(D_1N\) and at most~\(D_2N\) vertices, by property~\((p1)\) above.  If~\({\cal S} \subset \{1,2,\ldots,n\}\) is a deterministic set of size~\(\#{\cal S} \in [D_1N,D_2N],\)  then  given the occurrence of the joint event~\(E_{vert} \cap E_{short}\) and the event~\({\cal V}(W_i) = {\cal S},\) the degree~\(d_i(v)\) of a vertex~\(v \in {\cal S}\) in~\(\Gamma_i\) is stochastically dominated from below by a Binomial random variable with parameters~\(D_0N\) and~\(p,\) by property~\((p4)\) above. Also since~\({\cal S}\) has at most~\(D_2N\) vertices, we see that~\(d_i(v)\) is stochastically dominated from above by a Binomial random variable with parameters~\(D_2N\) and~\(p.\)



Let~\({\cal F}_X\)  be the sigma-field generated by the vertex locations~\(\{X_j\},\) so  that the events~\(E_{vert},E_{short}\) and the random set~\({\cal V}(W_i)\) are~\({\cal F}_X-\)measurable. Defining
\[E_{heavy}(i,v) := \left\{\frac{D_0Np}{2} \leq d_i(v) \leq 2D_2Np\right\}\] and using the deviation estimate~(\ref{conc_est_f}), we get from the discussion in the previous paragraph that
\begin{align}\label{e_nice_loc_est}
&\mathbb{P}\left(E_{heavy}^c(i,v) \,\middle \vert\, {\cal F}_X\right) \ind({\cal V}(W_i) = {\cal S}) \ind\left(E_{vert} \cap E_{short}\right) \nonumber\\
&\;\;\;\;\;\leq\;\;\;\exp\left(-D_3 Np\right) \ind({\cal V}(W_i) = {\cal S}) \nonumber\\
&\;\;\;\;\;=\;\;\;\exp\left(-D_3 N^{\varepsilon}\right) \ind({\cal V}(W_i) = {\cal S}),
\end{align}
for some constant~\(D_3 > 0\) not depending on the choice of~\({\cal S},\) where the final estimate in~(\ref{e_nice_loc_est}) is true since~\(p = \frac{1}{N^{1-\varepsilon}},\) by~(\ref{prob_est}).

Further defining
\[E_{heavy}(i) := \bigcap_{v \in {\cal V}(W_i)}\left\{\frac{D_0Np}{2} \leq d_i(v) \leq 2D_2Np\right\}, \]
we get from~(\ref{e_nice_loc_est}) and the union bound that
\begin{align}\label{e_nice_loc_est_two}
&\mathbb{P}\left(E_{heavy}^c(i) \,\middle \vert\, {\cal F}_X\right) \ind({\cal V}(W_i) = {\cal S}) \ind(E_{vert} \cap E_{short}) \nonumber\\
&\;\;\;\;\;\leq\;\;\;\#{\cal S} \cdot \exp\left(-D_3 N^{\varepsilon}\right) \ind({\cal V}(W_i) = {\cal S}) \nonumber\\
&\;\;\;\;\;\leq\;\;\;n \cdot \exp\left(-D_3N^{\varepsilon}\right) \ind({\cal V}(W_i) = {\cal S}),
\end{align}
where the final estimate in~(\ref{e_nice_loc_est_two}) is true since the size of any vertex set is at most~\(n.\)

Taking averages in~(\ref{e_nice_loc_est_two}), we get that
\begin{equation}
\mathbb{P}\left(E_{heavy}^c(i)\bigcap \left\{{\cal V}(W_i) = {\cal S}\right\}\bigcap E_{vert} \bigcap E_{short}\right) \leq n \cdot e^{-D_3N^{\varepsilon}} \mathbb{P}\left({\cal V}(W_i) = {\cal S}\right) \nonumber
\end{equation}
and summing over all possible values of~\({\cal S},\) we get that
\begin{equation}\label{e_nice_loc_est_4}
\mathbb{P}\left(E_{heavy}^c(i)\bigcap E_{vert} \bigcap E_{short}\right) \leq n \cdot \exp\left(-D_3N^{\varepsilon}\right).
\end{equation}

Using
\begin{align}
\mathbb{P}(A^c) &= \mathbb{P}(A^c \cap B) + \mathbb{P}(A^c \cap B^c) \nonumber\\
&\leq \mathbb{P}(A^c \cap B) + \mathbb{P}(B^c) \label{pab_rel}
\end{align}
with~\(A = E_{heavy}(i)\) and~\(B = E_{vert} \cap E_{short}\) and invoking the estimate~(\ref{e_vert_short_est}) for the event~\(B,\) we get that
\begin{equation}\label{jalsa}
\mathbb{P}(E_{heavy}^c(i)) \leq n \cdot \exp\left(-D_3N^{\varepsilon}\right) + \exp\left(-C_1 n^{1-2c}\right).
\end{equation}
Defining
\[E_{heavy} := \bigcap_{i=1}^{T} E_{heavy}(i),\] we then get by an application of the union bound that
\begin{align}\label{e_heavy_est}
\mathbb{P}(E^c_{heavy}) &\leq  n \cdot T \exp\left(-D_3N^{\varepsilon}\right) +  T \cdot \exp\left(-C_1n^{1-2c}\right) \nonumber\\
&\leq \exp\left(-D_4n^{b}\right),
\end{align}
for some constants~\(D_4,b > 0,\) since the number of squares~\(T \leq 4 n\) (see~(\ref{t_est})) and~\(N = nr_n^2 = n^{1-2\beta} \geq n^{1-2c} \rightarrow \infty,\) by Theorem statement.

As before, for future reference, we remark that if~\(E_{vert} \cap E_{short} \cap E_{heavy}\) occurs, then the following property holds for each square~\(W_i,1 \leq i \leq T:\)\\
\(\textbf{(p6)}\) The degree~\(d_i(v)\) of a vertex~\(v\) in~\(\Gamma_i\) satisfies
\begin{equation}\label{d_low_up_est}
2d_{low} := \frac{D_0Np}{2} = \frac{D_0N^{\varepsilon}}{2} \leq d_i(v) \leq 2D_2Np = 2D_2N^{\varepsilon} =: d_{up}.
\end{equation}
\(\textbf{(p7)}\) Each edge in~\(\Gamma_i\) is nice.\\

We use properties~\((p6)-(p7)\) later to extract heavy paths containing few edges, from each~\(W_i.\) We also recall (see the beginning of this proof) that our strategy is to connect these heavy paths  using nice cross edges; i.e., long and heavy edges having one endvertex in~\(W_i\) and the other endvertex in the adjacent  square~\(W_{i+1}.\)  We therefore define~\(E_{cross}\) to be the event that for each~\(1 \leq i \leq T-1,\) there is a nice cross edge having one endvertex in~\(W_i\) and other endvertex in~\(W_{i+1}.\) Arguing as in the derivation of~(\ref{e_heavy_est}) and using the property~\((p5)\) that each vertex in~\(W_i\) is also adjacent to at least~\(D_0 N\) vertices by long edges in~\(W_{i+1},\) we get that
\begin{equation}\label{e_cross_est}
\mathbb{P}(E^c_{cross})\leq \exp\left(-D_5 N^{\varepsilon}\right) = o(1)
\end{equation}
for some constant~\(D_5 > 0.\)

If~\(E_{vert} \cap E_{short} \cap E_{heavy} \cap E_{cross}\) occurs, then the following property holds:\\
\(\textbf{(p8)}\) For every~\(1 \leq i \leq T-1,\) there is a nice cross edge having one endvertex in~\(W_i\) and other endvertex in~\(W_{i+1}.\)\\



The final ingredient concerns the maximum weight of a long edge present within any square~\(W_i.\) If~\(E_{vert} \cap E_{short}\) occurs, then from property~\((p4)\) above we know  that there is a constant~\(D_0 > 0\) such that any vertex~\(v\) in~\(W_i\) is adjacent to at least~\(D_0N\) other vertices in~\(W_i,\) by long edges, i.e., edges of length at least~\(\varepsilon r_n.\) Also, by property~\((p1)\) there are at most~\(D_2N\) vertices in~\(W_i\) for some constant~\(D_2 > 0\) and so~\(v\) is adjacent to at most~\(D_2N\) other vertices in~\(W_i.\) The standard handshaking argument implies that the total number of edges in any graph is twice the sum of its vertex degrees and so, the total number of long edges having both endvertices in~\(W_i\) is at least~\(\gamma_1 N^2\) and at most~\(\gamma_2 N^2,\) for some constants~\(\gamma_1,\gamma_2 > 0.\)

Suppose first that~\(L_n = \frac{\zeta_n}{r_n} \geq (\log{n})^{1+\kappa}\) so that~\(\alpha_n\) defined in Theorem statement equals~\(H(N^2),\) where~\(H(.)\) is the inverse ccdf as defined in~(\ref{h_def}). If~\(\Lambda_i\) denotes the maximum weight of a long edge with both endvertices in~\(W_i,\) then for any~\(x > 0\)
\begin{align}
\mathbb{P}\left(\Lambda_i \leq x  \mid E_{vert} \cap E_{short}\right) &\leq \left(\mathbb{P}\left(W(h) \leq x\right)\right)^{\gamma_1 N^2}   \nonumber\\
&= \left(1-\mathbb{P}(W(h) > x)\right)^{\gamma_1 N^2}.\label{tnjb}
\end{align}
Setting~\(x = \alpha_n = H(N^2),\) we get from the  relation~(\ref{inverse_cdf_equality}) that
\begin{equation}\label{jilla}
\mathbb{P}\left(W(h) > \alpha_n\right) = \frac{1}{N^2}
\end{equation} and so
\begin{equation} \label{tulsa}
\mathbb{P}\left(\Lambda_i \leq \alpha_n  \mid E_{vert} \cap E_{short}\right) \leq  \left(1-\frac{1}{N^2}\right)^{\gamma_1 N^2} \leq e^{-\gamma_1}.
\end{equation}

We recall that there are~\(T = 4L_n = \frac{4\zeta_n}{r_n}\) squares in~\(\{W_i\}_{1 \leq i \leq T}\) and given the joint event~\(E_{vert} \cap E_{short},\)  the events~\(\{\Lambda_i \geq H(N^2)\}_{1 \leq i \leq T}\) are independent and occur with probability at least~\(1-e^{-\gamma_1} > 0\) each, by~(\ref{tulsa}). For~\(\theta > 0\) we therefore define the event~\[E_{lambda}(\theta) := \left\{\sum_{i=1}^{T} \ind(\Lambda_i \geq H(N^2)) \geq \theta T \right\}\] and use the deviation estimate~(\ref{conc_est_f}), to get that
\begin{equation}\label{lambda_estv}
\mathbb{P}\left(E_{lambda}(\theta_1) \mid E_{vert} \cap E_{short}\right) \geq 1-e^{-\theta_2 L_n},
\end{equation}
for some constants~\(\theta_1,\theta_2 > 0.\) Setting~\(E_{lambda} := E_{lambda}(\theta_1)\) and using the estimate~(\ref{e_vert_short_est}) for the event~\(E_{vert} \cap E_{short},\) we argue  as before (see for e.g., the derivation of~(\ref{jalsa})) to get that
\begin{align}\label{tasker}
\mathbb{P}\left(E_{lambda} \right) &\geq 1-e^{-\theta_2 L_n} - e^{-2\theta_3N }\nonumber\\
&\geq 1-e^{-\theta_3 L_n} - e^{-\theta_3 N}
\end{align}
for some constant~\(\theta_3 > 0.\)

Say that an edge~\(h = (u,v)\) of~\(G\) is \emph{super nice} if~\(h\) is long, its weight~\(W(h) > \alpha_n\) and there exists~\(1 \leq i \leq T\) such that both~\(u\) and~\(v\) are located in~\(W_i.\)  We know that~\(N = nr_n^2 \geq n^{1-2\beta} \rightarrow \infty\) by Theorem statement and also the inverse cdf~\(H(z)\) is strictly increasing for all~\(z\) large, again by Theorem statement. Therefore any super nice edge is also nice, i.e., satisfies the properties~\((i)-(iii)\) stated in the paragraph following~(\ref{a_choice}). Moreover,~\(W_i\) contains a super nice edge if and only if~\(\Lambda_i > H(N^2)\) and so summarizing, we see that if the event~\(E_{lambda}\) occurs, then the following property holds:\\
\(\textbf{(p9)}\) There are at least~\(\theta_1 T\) squares in~\(\{W_i\}_{1 \leq i \leq T},\) each of which  contain a super nice edge.\\
\(\textbf{(p10)}\) Any super nice edge is also nice.\\

Finally, defining~\[E_{tot} :=  E_{vert} \bigcap E_{short} \bigcap E_{heavy}  \bigcap E_{cross} \bigcap E_{lambda},\] we get from~(\ref{e_vert_short_est}),~(\ref{e_heavy_est}),~(\ref{e_cross_est}),~(\ref{tasker}) and the union bound that
\begin{align}\label{e_tot_est2}
&\mathbb{P}(E_{tot}) \nonumber\\
&\;\;\geq\;\;1- \exp\left(-C_1n^{1-2c}\right) - \exp\left(-D_4n^{b}\right) - \exp\left(-D_5 N^{\varepsilon}\right)- \exp\left(-\theta_3 N\right)-e^{-\theta_3 L_n} \nonumber\\
&\;\;\geq\;\;1-\exp\left(-\theta_4 n^{z}\right) -e^{-\theta_4 L_n},
\end{align}
for some constants~\(\theta_4,z > 0,\) again using the fact that~\(N = nr_n^2 \geq n^{1-2\beta}  \geq n^{1-2c} \rightarrow \infty,\) by Theorem statement. Since~\(L_n \geq (\log{n})^{1+\kappa},\)  we then get from~(\ref{e_tot_est2}) that
\begin{equation}\label{e_tot_est}
\mathbb{P}(E_{tot}) \geq 1-\frac{1}{n^{1+\gamma}}
\end{equation}
for any constant~\(\gamma > 0\) and all~\(n\) large.

If~\(L_n \leq (\log{n})^{1+\kappa},\) we argue as above with minor modifications, after the estimate~(\ref{tnjb}). Indeed, in this case the term~\(\alpha_n\) in Theorem statement is~\(\alpha_n = H\left(\frac{N^2}{(\log{n})^{1+\kappa}}\right)\) and so instead of~(\ref{jilla}), we have
\begin{equation}\label{jilla_22}
\mathbb{P}\left(W(h) > \alpha_n\right) = \frac{(\log{n})^{1+\kappa}}{N^2}
\end{equation} and  instead of~(\ref{tulsa}) we have
\begin{equation} \label{tulsa_22}
\mathbb{P}\left(\Lambda_i \leq \alpha_n  \mid E_{vert} \cap E_{short}\right) \leq  \left(1-\frac{(\log{n})^{1+\kappa}}{N^2}\right)^{\gamma_1 N^2} \leq e^{-\gamma_1(\log{n})^{1+\kappa}}.
\end{equation}
There are~\(T \leq 2n\) squares in~\(\{W_i\}_{1 \leq i \leq T}\) (see~(\ref{t_est})) and  by a direct union bound, we get in place of~(\ref{lambda_estv}) that
\begin{align}\label{lambda_estv_22}
\mathbb{P}\left(E_{lambda}(\theta_1) \mid E_{vert} \cap E_{short}\right) &\geq 1-T \cdot e^{-\gamma_1(\log{n})^{1+\kappa}}\nonumber\\
&\geq 1- 2n\cdot e^{-\gamma_1(\log{n})^{1+\kappa}}
\end{align}
Arguing as in the analysis following~(\ref{lambda_estv}), we again get that~(\ref{e_tot_est}) holds.

The definition of the event~\(E_{tot}\) above and its corresponding probability estimate~(\ref{e_tot_est}), completes the first step of our proof.

\emph{\underline{Step 2}}: Recall that~\(\Gamma_i \subset G, 1 \leq i \leq T\) is the induced graph of~\(G\) whose vertex set is the set of all vertices~\({\cal V}(W_i)\)  located in the~\(\frac{r_n}{4} \times \frac{r_n}{4}\) square~\(W_i\) and the edge set is the set of all nice edges of~\(G;\) i.e., edges that are both long and heavy (see the beginning of the proof of Step~\(1\)). In this step, obtain an upper bound for the diameter of a slightly modified version of~\(\Gamma_i.\)

Formally, for an integer constant~\(\Delta \geq 1,\)  let~\({\cal Y} \subset \{1,2,\ldots, n\}\) be any set of~\(\Delta\) deterministic vertices and let~\(\Gamma_i\left({\cal Y}\right)\) be the induced subgraph obtained by removing the vertices of~\({\cal Y}\) from~\(\Gamma_i.\) The graph distance between two vertices~\(a\) and~\(b\) in~\(\Gamma_i\left({\cal Y}\right)\) is the minimum number of edges in a path in~\(\Gamma_i\left({\cal Y}\right),\) containing~\(a\) and~\(b\) as endvertices. Here we use the notation that the minimum of an empty set is~\(\infty.\) The diameter~\(diam\left(\Gamma_i\left({\cal Y}\right) \right) \) is the maximum graph distance between two vertices in~\(\Gamma_i\left({\cal Y}\right).\)

For constant~\(\mu > 0,\) let
\[F_{diam}(\mu, \Delta) := \bigcap_{i=1}^{T} \bigcap_{{\cal Y}} \left\{diam\left(\Gamma_i\left({\cal Y}\right)\right) \leq \mu \right\} \] be the event that the diameter of~\(\Gamma_i\left({\cal Y}\right)\) is at most~\(\mu\) for any~\(i\) or~\({\cal Y}.\) Recalling that each edge of~\(\Gamma_i\) has length at least~\(\varepsilon r_n\) (see Step~\((1)\) above), our main estimate in this step is the following: For any irrational~\(0 < \varepsilon < \frac{1}{4}, \Delta \geq 1\)  and~\(\gamma > 0,\) we have that
\begin{equation}\label{diam_est_am}
\mathbb{P}\left(F_{diam}(K,\Delta)\right) \geq 1 - \frac{1}{n^{1+\gamma}},
\end{equation}
for all~\(n\) large, where~\(K = K(\varepsilon)\) is the smallest integer strictly larger than~\(\varepsilon^{-1},\) as defined in~(\ref{k_chce}).

Our strategy to prove~(\ref{diam_est_am}) is as follows. Let~\( 1 \leq u \leq n\) be \emph{any} vertex different from the vertices in~\({\cal Y}.\) Set~\({\cal N}_0(u) = \{u\}\) and for integer~\(l \geq 1,\) let~\[{\cal N}_l(u) := {\cal N}_l(u,{\cal Y}) \subset {\cal V}(W_i) \setminus {\cal Y}\] be the set of all vertices at a graph distance of~\(l\) from~\(u\) in~\(\Gamma_i\left({\cal Y}\right).\) Also let~\(\Gamma^{(L)}_i \subset G\) be the induced subgraph of~\(G\) with vertex set~\({\cal S},\) containing only long edges; i.e., edges with length at least~\(s_n.\) By definition, we see that~\[\Gamma_i({\cal Y}) \subset \Gamma_i \subset \Gamma^{(L)}_i \subset G.\]

Let~\({\cal S} \subset \{1,2,\ldots,n\}\) be any deterministic subset of size~\(\#{\cal S} \in [D_1N, D_2N]\) and let~\(\Gamma\) be any deterministic graph with vertex set~\({\cal S}.\) We assume  that
\begin{equation}\label{fs_event_def}
F({\cal S},\Gamma) := \left\{{\cal V}(W_i) = {\cal S}\right\} \bigcap \left\{\Gamma^{(L)}_i = \Gamma\right\}
\end{equation}
occurs and deduce~(\ref{diam_est_am}) by showing that with high probability, for any~\(u \in {\cal S}\setminus {\cal Y},\) every vertex in~\(\Gamma_i({\cal Y})\) belongs to~\({\cal N}_l(u)\) for some~\(1 \leq l \leq K.\)



We begin with some preliminary definitions and computations. For~\(l \geq 1,\) let
\begin{equation}\label{rl_def}
{\cal R}_l(u) := {\cal V}(W_i) \setminus  \left({\cal Y} \bigcup \bigcup_{j=0}^{l-1} {\cal N}_j(u)\right)
\end{equation}
be the set of all vertices that are at a graph distance of at least~\(l\) from~\(u\) in~\(\Gamma_i\left({\cal Y}\right).\) For an integer constant~\(M \geq 1\) to be determined later, let~\(Q_l(M,u)\) be the event that every vertex~\(v \in {\cal N}_l(u)\) satisfies the following three properties:\\
\((i)\)~\(v\) is adjacent to at most~\(M\) vertices in~\({\cal N}_l(u),\)\\
\((ii)\)~\(v\) is adjacent to at  most~\(M\) vertices in~\({\cal N}_{l-1}(u)\) and\\
\((iii)\)~\(v\) is adjacent to at least~\(d_{low}\) and at  most~\(d_{up}\) vertices in~\({\cal R}_{l+1}(u),\) where~\(d_{low}\) and~\( d_{up}\) defined in~(\ref{d_low_up_est}) respectively relate to the lower and upper bounds for the vertex degree in~\(\Gamma_i,\) as derived in property~\((p6)\) of Step~\(1\) above.\\
Set
\begin{equation}\label{h_l_def_ax}
H_{l}(u) := \bigcap_{j=0}^{l}\left\{\frac{d^{j}_{low}}{M^{j-1}} \leq \#{\cal N}_{j}(u) \leq d_{up}^{j}\right\}
\end{equation}
and let~\(E_{vert}\) and~\(E_{short}\) be the events defined in Step~\(1.\) We finally define the events~\[I_0(u) = \Lambda_0(u) := E_{vert} \bigcap E_{short}\] and
\begin{equation}\label{i_def}
\Lambda_l(u) := E_{vert} \bigcap E_{short} \bigcap H_l(u)\;\;\text{ and }\;\;I_l(u) := \Lambda_l(u) \bigcap  Q_{l}(M,u),
\end{equation}
for~\(l \geq 1.\)

We assume henceforth that~\(E_{vert} \cap E_{short}\)   occurs and get from property~\((p1)\) that~\({\cal V}(W_i)\) has size at least~\(D_1N\) and  at most~\(D_2N,\) for some constants~\(D_1,D_2 > 0,\) where~\[D_1N =D_1nr_n^2 \geq D_1n^{1-2\beta} \rightarrow \infty\] by Theorem statement. We therefore let~\({\cal S} \subset \{1,2,\ldots,n\}\) be any deterministic subset of size~\(\#{\cal S} \in [D_1N, D_2N]\) and let~\(\Gamma\) be any graph with vertex set~\({\cal S}\) that is consistent with the occurrence of~\(E_{vert} \cap E_{short}\) and assume that the event~\(F({\cal S},\Gamma)\) defined in~(\ref{fs_event_def}) holds.

For irrational~\(0 < \varepsilon < \frac{1}{4},\) let~\(K(\varepsilon)\) be the largest integer satisfying~\(\frac{1}{K} > \varepsilon,\) as defined in~(\ref{k_chce}). As a first step towards establishing~(\ref{diam_est_am}), we show below that for every~\(\gamma > 0,\) the following estimates hold. For every~\({\cal Y} \subset \{1,2,\ldots,n\}\) containing~\(\Delta\) vertices and every~\(u \in {\cal S} \setminus {\cal Y},\)  we have that
\begin{equation}\label{h_gen_est}
\mathbb{P}\left(I^c_l(u) \bigcap I_{l-1}(u) \mid F({\cal S},\Gamma)\right) \leq \frac{3}{n^{\gamma+\Delta+3}},
\end{equation}
for each~\(1 \leq l \leq K-1.\)

The derivation of~(\ref{h_gen_est}) in turn is facilitated by  auxiliary estimates stated in the next paragraph and proved in the subsequent paragraphs. Specifically, for constant integer~\(M \geq 1\) to be determined later, we set~\(E_l(M,u)\) to be the event that every vertex in~\({\cal R}_l(u)\) is adjacent to at most~\(M\) vertices in~\({\cal N}_l(u).\) Similarly let~\(F_l(M,u)\) be the event that every vertex in~\({\cal R}_l(u)\) is adjacent to at most~\(M\) vertices in~\({\cal N}_{l-1}(u)\) and finally define~\(\Psi_l(M,u)\) to be the event that every vertex in~\({\cal N}_{l}(u)\) is adjacent to at least~\(2d_{low}\) and at most~\(d_{up}\) vertices, in~\({\cal R}_{l}(u).\)

Our strategy to prove~(\ref{h_gen_est}) is as follows. We first argue that if
\begin{equation}\label{e_joint_def}
E_{joint}(u) :=  E_l(M,u) \bigcap \Psi_l(M,u) \bigcap F_l(M,u)   \bigcap I_{l-1}(u) \bigcap F({\cal S},\Gamma)
\end{equation} occurs, then necessarily so does~\(I_l(u).\) We then show that
\begin{equation}
\mathbb{P}\left(F_l^c(M,u) \bigcap I_{l-1}(u) \mid   F({\cal S},\Gamma)\right)  \leq \frac{1}{n^{\Delta+\gamma+3}}, \label{fl_est_app}
\end{equation}
\begin{equation}
\mathbb{P}\left(E_l^c(M,u) \bigcap F_l(M,u) \bigcap I_{l-1}(u) \mid F({\cal S},\Gamma)\right) \leq \frac{1}{n^{\Delta+\gamma+3}}, \label{el_est_app}
\end{equation}
and
\begin{equation}
\mathbb{P}\left(\Psi_l^c(M,u) \bigcap F_l(M,u) \bigcap I_{l-1}(u) \mid F({\cal S},\Gamma)\right) \leq \frac{1}{n^{\Delta+\gamma+3}} \label{psi_l_est_app}
\end{equation}
for all~\(n\) large. Applying the union bound, we then get~(\ref{h_gen_est}).

Indeed, the occurrence of the events~\(E_l(M,u)\) and~\(F_l(M,u)\) respectively, imply the properties~\((i)\) and~\((ii)\)  in the definition of~\(Q_l(M,u)\) above. The event~\(\Psi_l(M,u)\) ensures that  any vertex~\(v \in {\cal N}_{l}(u)\) has at least~\(2d_{low}\) and at most~\(d_{up}\) neighbours in~\({\cal R}_{l},\) where
\begin{equation}\label{zimaad}
2d_{low} = \frac{D_0Np}{2} = \frac{D_0N^{\varepsilon}}{2} \rightarrow \infty
\end{equation}
since~\(N = nr_n^2 \geq n^{1-2\beta} \rightarrow \infty,\) by Theorem statement. Because~\(E_{l}(M,u)\) also occurs, we know that~\(v\) is adjacent to at most~\(M\) other vertices in~\({\cal N}_l(u)\) and is  therefore adjacent to at least~\[2d_{low}- M\geq d_{low}\] vertices in~\({\cal R}_{l+1},\) for all~\(n\) large. This implies that property~\((iii)\) in the definition of~\(Q_l(M,u)\) also holds.

Moreover, because~\(I_{l-1}(u)\) occurs, we know that~\({\cal N}_{l-1}(u)\) has size at most~\(d_{up}^{l-1}\) and property~\((iii)\) in the definition of~\(Q_{l-1}(M,u) \supset I_{l-1}(u)\) implies that each vertex in~\({\cal N}_{l-1}(u)\) is adjacent to at most~\(d_{up}\) other vertices in~\({\cal R}_l(u).\) Therefore~\({\cal N}_l(u)\) necessarily has size
\begin{equation}\label{zimax5}
\#{\cal N}_l(u) \leq  d_{up} \#{\cal N}_{l-1}(u) \leq d^{l}_{up}.
\end{equation}
To demonstrate that~\(I_l(u)\) occurs, it is therefore enough to lower bound the size of~\({\cal N}_l(u).\) For~\(l=1,\) the discussion following~(\ref{zimaad}) implies that
\begin{equation}\label{n_one_est}
d_{up} \geq \#{\cal N}_1(u) \geq d_{low} = \frac{D_0N^{\varepsilon}}{4}
\end{equation}
and so we conclude that~\(I_1(u)\) occurs.

To estimate the size of~\({\cal N}_l(u)\) for~\(l \geq 2,\) we proceed by iteration. Since~\(I_{l-1}(u)\) occurs, we know that
\begin{equation}\label{ind_est}
\frac{d^{j}_{low}}{M^{j-1}} \leq \#{\cal N}_{j}(u) \leq d_{up}^{j}
\end{equation}
for each~\(1 \leq j \leq l-1.\) Let~\(K_{l-1,l}(u)\) be the bipartite subgraph of~\(\Gamma_i\left({\cal Y}\right)\) with left vertex set~\({\cal N}_{l-1}(u)\) and right vertex set~\({\cal N}_l(u).\) For any vertex~\(v \in {\cal N}_{l-1}(u),\) let~\(d_{left}(v)\) be the degree of~\(v\) in~\(K_{l-1,l}(u)\) and similarly for a vertex~\(w \in {\cal N}_{l-1}(u),\) let~\(d_{right}(w)\) be the degree of~\(w\) in~\(K_{l-1,l}(u).\) By the standard handshaking argument, we have that the total number of edges in~\(K_{l-1,l}(u)\) is the sum of the left vertex degrees and is also equal to the sum of right vertex degrees and so
\begin{equation}\label{hand_shake}
\sum_{v \in {\cal N}_{l-1}(u)} d_{left}(v) = \sum_{w \in {\cal N}_{l}(u)}d_{right}(w).
\end{equation}

Because~\(Q_{l-1}(M,u) \supset I_{l-1}(u)\) occurs, each vertex~\(v \in {\cal N}_{l-1}(u)\) contains at most~\(M\) neighbours in~\({\cal N}_{l-1}(u)\) and at most~\(M\) neighbours in~\({\cal N}_{l-2}(u)\) (see properties~\((i)-(ii)\) in the definition of~\(Q_{l-1}(M,u) \) prior to~(\ref{i_def})). Similarly, property~\((iii)\) ensures that~\(v\) has  at least~\(d_{low}\) neighbours in~\({\cal R}_{l}(u).\)  Thus
\begin{equation}\label{d_left_est}
d_{left}(v) \geq d_{low}.
\end{equation}
and so
\begin{equation}\label{wamiqa_one}
\sum_{v \in {\cal N}_{l-1}(u)} d_{left}(v) \geq d_{low} \cdot \#{\cal N}_{l-1}(u).
\end{equation}
Similarly, the occurrence of~\(F_{l}(M,u)\)  ensures that~\(d_{right}(w) \leq M\) for each right vertex~\(w \in {\cal N}_l(u)\) and so
\begin{equation}\label{wamiqa_two}
\sum_{w \in {\cal N}_l(u)} d_{right}(w) \leq M \cdot \#{\cal N}_l(u).
\end{equation}
Combining~(\ref{wamiqa_two}) with~(\ref{wamiqa_one}) and using~(\ref{hand_shake}), we then get that
\begin{equation}\label{n_two_est}
\#{\cal N}_l(u) \geq \frac{d_{low}}{M} \cdot \#{\cal N}_{l-1}(u) \geq \frac{d_{low}^l}{M^{l-1}},
\end{equation}
by~(\ref{ind_est}). This obtains the desired lower bound for the size of~\({\cal N}_l(u)\) and we get that if~\(E_{joint}(u)\) occurs, then so does~\(I_l(u).\)

In what follows, we estimate the probability of each of the three events~\(E_l(M,u), F_l(M,u)\) and~\(\Psi_l(M,u)\) separately. Recalling that~\({\cal F}_X\) is the sigma-field generated by the vertex locations~\(\{X_j\},\) we let~\({\cal G}_l = {\cal G}_l\left(u,{\cal Y}\right)\) be the sigma-field generated by the union of~\({\cal F}_X\) and  the induced subgraph of~\(\Gamma_i({\cal Y})\) with vertex set~\({\cal M}_l := \{{\cal N}_j(u)\}_{0 \leq j \leq l}.\) Similarly, we let~\({\cal F}_l \supset {\cal G}_l\) be the sigma-field generated by the union of~\({\cal G}_l\) and the weight of all edges with at least one endvertex in~\({\cal M}_l.\) With the above definitions, we see that the events~\(\Lambda_l(u)\) and~\(I_l(u)\) defined in~(\ref{i_def}) are respectively~\({\cal G}_l-\) and~\({\cal F}_l-\)measurable. Similarly, the event~\(F_l(M,u)\)  is~\({\cal F}_{l-1}-\)measurable. 

Letting~\(\gamma > 0\) be arbitrary, we argue below that if~\(M = M(\varepsilon,\Delta,\gamma) > 0\) is chosen large enough, then the following property holds for every vertex~\(u \in {\cal S}\setminus {\cal Y}\) and every~\(1 \leq l \leq K-1:\) If~\({\cal S}_0 = \{u\}\) and~\({\cal S}_j, 1 \leq j \leq l\) are deterministic and mutually disjoint subsets of~\( {\cal S} \setminus {\cal Y},\) then
\begin{align}\label{elm_est_ff}
&\mathbb{P}\left(F_l^c(M,u) \mid {\cal G}_{l-1}\right) \ind\left(\Lambda_{l-1}(u) \bigcap F({\cal S},\Gamma)\right) \ind\left(J_{l-1}\right) \nonumber\\
&\;\;\;\;\;\leq\;\;\; \frac{1}{n^{\Delta+\gamma+3}}\ind\left(\Lambda_{l-1}(u) \bigcap F({\cal S},\Gamma)\right) \ind\left(J_{l-1}\right),
\end{align}
where~\[J_{l-1} := \bigcap_{j=0}^{l-1} \left\{{\cal N}_j(u) = {\cal S}_j\right\}\] is~\({\cal G}_{l-1}-\)measurable by definition,
\begin{align}\label{elm_est_ff_33}
&\mathbb{P}\left(\Psi_l^c(M,u) \mid {\cal F}_{l-1}\right) \ind\left(I_{l-1}(u) \bigcap F_l(M,u) \bigcap F({\cal S},\Gamma)\right) \ind\left(J_{l}\right) \nonumber\\
&\;\;\;\;\;\leq\;\;\; \frac{1}{n^{\Delta+\gamma+3}}\ind\left(I_{l-1}(u) \bigcap F_l(M,u) \bigcap F({\cal S},\Gamma)\right) \ind\left(J_{l}\right),
\end{align}
and
\begin{align}\label{elm_est}
&\mathbb{P}\left(E_l^c(M,u) \mid {\cal F}_{l-1}\right) \ind\left(I_{l-1}(u) \bigcap  F_l(M,u) \bigcap F({\cal S},\Gamma)\right) \ind\left(J_{l}\right) \nonumber\\
&\;\;\;\;\;\leq\;\;\; \frac{1}{n^{\Delta+\gamma+3}}\ind\left(I_{l-1}(u) \bigcap  F_l(M,u) \bigcap F({\cal S},\Gamma)\right) \ind\left(J_{l}\right),
\end{align}
where
\[J_{l} := \bigcap_{j=0}^{l} \left\{{\cal N}_j(u) = {\cal S}_j\right\} = J_{l-1} \bigcap \left\{{\cal N}_{l}(u) = {\cal S}_l\right\}\] is~\({\cal F}_{l-1}-\)measurable, since the knowledge of weights of all edges with at least one endvertex in~\({\cal N}_{l-1}(u)\) precisely determines~\({\cal N}_l(u).\)

We prove~(\ref{elm_est}),~(\ref{elm_est_ff_33}) and~(\ref{elm_est_ff}) in that order below. Because~\(I_{l-1}(u)\) occurs, it suffices to consider sets~\(\{{\cal S}_j\}_{1 \leq j \leq l-1}\) such that~\({\cal S}_j\) has at most~\(d_{up}^{j}\) vertices, for each~\(1 \leq j \leq l-1.\) Also the estimate~(\ref{zimax5}) implies that it suffices to consider only sets~\({\cal S}_l\) containing at most~\(d_{up}^{l}\) vertices.

Each edge with both endvertices in the vertex set~\({\cal V}(W_i)\) is independently present in the graph~\(\Gamma_i,\) with probability~\(p = \frac{1}{N^{1-\varepsilon}},\) as derived in~(\ref{prob_est}).  This implies that if~\(v \in {\cal R}_l(u)\) is at a distance of at least~\(l\) from~\(u\) in~\(\Gamma_i\left({\cal Y}\right)\) and~\(E_l(M,u,v)\) is the event that~\(v\) is adjacent to at most~\(M\) vertices of~\({\cal N}_l(u),\) then~\(E_l(M,u,v)\) is determined by the weights of edges containing \emph{both} endvertices in~\({\cal R}_l(u)\) and is therefore independent of~\({\cal F}_{l-1}.\) Consequently, recalling that~\({\cal N}_l(u) = {\cal S}_l\) has at most~\(d_{up}^l\) vertices, we get that
\begin{align}
&\mathbb{P}\left(E_l^c(M,u,v) \mid {\cal F}_{l-1} \right) \ind\left(I_{l-1}(u) \bigcap F_l(M,u) \bigcap  F({\cal S},\Gamma)\right) \ind\left(J_{l}\right) \nonumber\\
&\;\;\;\;\;\leq\;\;\; {d_{up}^l \choose M} p^{M} \ind\left(I_{l-1}(u) \bigcap F_l(M,u) \bigcap F({\cal S},\Gamma)\right) \ind\left(J_{l}\right). \label{tiramisu}
\end{align}

Recalling that~\(d_{up} = D_2Np\) where~\(D_2 > 0\) is a constant (see~(\ref{d_low_up_est})), we have that
\begin{align}
{d_{up}^l \choose M} \cdot p^{M} &\leq \left(d_{up}^l \cdot p\right)^M \nonumber\\
&= \left((D_2Np)^{l} \cdot p\right)^{M} \nonumber\\
&= C\left(\frac{Np}{N^{\frac{1}{l+1}}}\right)^{M(l+1)} \nonumber\\
&= C \left(\frac{1}{N^{\frac{1}{l+1}-\varepsilon}}\right)^{M(l+1)} \label{tiramisu_two}
\end{align}
for some constant~\(C > 0,\) where the final expression in~(\ref{tiramisu_two}) is true since~\(p = \frac{1}{N^{1-\varepsilon}}\) by~(\ref{prob_est}).

By choice~\(1 \leq l \leq K-1\) and~\(K = K(\varepsilon)\) is the largest  integer smaller than~\(\frac{1}{\varepsilon}.\) Since~\(\varepsilon  >0\) is irrational, we have that~\(K  < \frac{1}{\varepsilon}\) strictly and so~\[\frac{1}{l+1} \geq \frac{1}{K} > \varepsilon\] strictly. Also,~\(N = nr_n^2 \geq n^{1-2\beta} \rightarrow \infty\) by Theorem statement and  so given~\(\gamma > 0,\) we choose the constant~\(M = M(\varepsilon,\gamma,\beta,\Delta) > 0\) large enough so that \[C \left(\frac{1}{N^{\frac{1}{K}-\varepsilon}}\right)^{2M} \leq \frac{1}{n^{\Delta+\gamma+4}}\]
for all~\(n\) large, where we recall that~\(\Delta\) is the size of the extraneous set~\({\cal Y}.\) With such a choice of~\(M,\) we see that the final term in~(\ref{tiramisu_two}) is at most~\(\frac{1}{n^{\Delta+\gamma+4}}.\) The event~\(E_l(M,u)\)  is simply the intersection of all the events~\(\{E_l(M,u,v)\}_{v \in {\cal R}_l(u)}\) and since there are at most~\(n\) vertices in~\({\cal R}_l(u),\) we get the desired estimate~(\ref{elm_est}) by an application of the union bound.


To establish~(\ref{elm_est_ff_33}), we argue as in the estimation of the probability of the event~\(E_{heavy}(i)\) defined prior to~(\ref{e_nice_loc_est_two}). Indeed, because~\(I_{l-1}(u)\) occurs, we know that the total number of vertices in~\(\bigcup_{j=1}^{l-1} {\cal N}_j(u)\) is \[\#\left(\bigcup_{j=1}^{l-1} {\cal N}_j(u)\right) \leq \sum_{j=1}^{l-1} d_{up}^{j} \leq l \cdot d_{up}^{l-1}.\] From the discussion prior to~(\ref{tiramisu_two}), we recall that~\(d_{up}\) is at most of the order of~\(Np = N^{\varepsilon}\) (see~(\ref{prob_est})). Therefore, for~\(l \leq K\) we get that \[\#\left(\bigcup_{j=1}^{l-1} {\cal N}_j(u)\right) \leq D \cdot N^{\varepsilon(l-1)} \leq D \cdot N^{\varepsilon (K-1)} = o(N)\] for some constant~\(D > 0,\) since~\(\varepsilon < \frac{1}{K-1},\) by our choice of~\(K\) in~(\ref{k_chce}).

By definition the set~\({\cal Y}\) has~\(\Delta\) vertices for~\(\Delta > 0\) constant and the total number of vertices in~\(\Gamma_i({\cal Y})\) is at least~\(D_1N -\Delta\) for some constant~\(D_1 > 0\) and all~\(n\) large, due to the occurrence of~\(E_{vert} \supset I_{l-1}(u).\) Consequently, the discussion in the above paragraph implies that the number of vertices in~\({\cal R}_l(u)\) is at least~\(D_1N-\Delta - o(N),\) for all~\(n\) large. As argued prior to the property~\((p4)\) of Step~\(1\) above, the event~\(E_{short} \supset I_{l-1}(u)\) ensures that each vertex of~\({\cal R}_l(u)\) is adjacent to at least~\[D_1N-\Delta-o(N) \geq D_0N\] other vertices of~\({\cal R}_l(u),\) by \emph{long} edges; i.e., edges of length at least~\(s_n.\) Here~\(D_0\) is the constant appearing in property~\((p4).\) Therefore arguing as in the derivation of the estimate~(\ref{e_nice_loc_est_two}) for~\(E_{heavy}(i),\) we obtain
\begin{align}
&\mathbb{P}\left(\Psi_l^c(M,u) \mid {\cal F}_{l-1}\right) \ind(I_{l-1}(u) \bigcap F_l(M,u) \bigcap F({\cal S},\Gamma)) \ind( J_l) \nonumber\\
&\;\;\leq\;\;\exp\left(-DN^{\varepsilon}\right) \ind(I_{l-1}(u) \bigcap F_l(M,u) \bigcap F({\cal S},\Gamma)) \ind( J_l) \nonumber\\
&\;\;\leq\;\;\frac{1}{n^{\Delta+\gamma+3}} \ind(I_{l-1}(u) \bigcap F_l(M,u) \bigcap F({\cal S},\Gamma)) \ind( J_l) \nonumber
\end{align}
for some constant~\(D >0\) and all~\(n\) large. This obtains~(\ref{elm_est_ff_33}).

Finally, to estimate the probability of the event~\(F_l(M,u),\) we use the fact that~\(\Lambda_{l-1}(u)\) occurs and so the number of vertices in~\({\cal N}_{l-1}(u)\) is at most~\(d_{up}^{l-1} \leq d_{up}^{l}.\) Therefore arguing as in the derivation of~(\ref{elm_est}), we get~(\ref{elm_est_ff}).  Taking averages in~(\ref{elm_est}) and summing over all the possible sets~\(\{{\cal S}_j\},\) we get that
\begin{align}
&\mathbb{P}\left(E_l^c(M,u) \bigcap F_l(M,u) \bigcap I_{l-1}(u) \bigcap F({\cal S},\Gamma)\right) \nonumber\\
&\;\;\leq\;\;\frac{1}{n^{\Delta+\gamma+3}} \mathbb{P}\left(F_l(M,u) \bigcap I_{l-1}(u) \bigcap F({\cal S}, \Gamma)\right) \nonumber\\
&\;\;\leq\;\;\frac{1}{n^{\Delta+\gamma+3}} \mathbb{P}\left(F({\cal S},\Gamma)\right). \nonumber
\end{align}
This obtains~(\ref{el_est_app}).

Similarly taking averages in~(\ref{elm_est_ff_33}) summing over all possible sets~\(\{{\cal S}_j\},\) we get
\begin{equation}
\mathbb{P}\left(\Psi_l^c(M,u) \bigcap F_l(M,u) \bigcap I_{l-1}(u) \bigcap F({\cal S},\Gamma)\right) \leq \frac{1}{n^{\Delta+\gamma+3}} \mathbb{P}\left(F({\cal S},\Gamma)\right). \nonumber
\end{equation}
This obtains~(\ref{psi_l_est_app}). Finally, since~\(I_{l-1}(u) \subset \Lambda_{l-1}(u)\) (see~(\ref{i_def})), we again sum over all possible~\(\{{\cal S}_j\}\) to get that
\begin{align}
&\mathbb{P}\left(F_l^c(M,u) \bigcap I_{l-1}(u) \bigcap F({\cal S},\Gamma)\right) \nonumber\\
&\;\; \leq \;\;\;\mathbb{P}\left(F_l^c(M,u) \bigcap I_{l-1}(u) \bigcap F({\cal S},\Gamma)\right) \nonumber\\
&\;\;\leq\;\;\frac{1}{n^{\Delta+\gamma+3}} \mathbb{P}\left(F({\cal S},\Gamma)\right). \nonumber
\end{align}
This obtains~(\ref{fl_est_app}) and therefore completes the proof of~(\ref{h_gen_est}).

We now use~(\ref{h_gen_est}) to derive the desired estimate~(\ref{diam_est_am}). If~\(I_{K-1}(u)\) occurs, then we know that
\begin{equation}\label{nku_est}
\#{\cal N}_{K-1}(u) \geq \frac{1}{M^{K-2}}d_{low}^{K-1} = 2C_0 (Np)^{K-1}
\end{equation}
for some constant~\(C_0 > 0,\) where the final estimate in~(\ref{nku_est}) is true since~\(d_{low} = \frac{D_0Np}{2}\) for some constant~\(D_0 > 0\) by definition (see~(\ref{d_low_up_est})).

Below we describe how the lower bound in~(\ref{nku_est}) facilitates in showing that every remaining vertex of~\(\Gamma_{i}\left({\cal Y}\right)\) is adjacent to at least one vertex of~\({\cal N}_{K-1}(u).\) We recall from~(\ref{rl_def}) that~\({\cal R}_l(u)\) is the set of all vertices at a distance of at least~\(l\) from~\(u\) and we let~\(I_K(u)\) be the event that every vertex of~\({\cal R}_{K}(u)\) is adjacent to at least one vertex of~\({\cal N}_{K-1}(u).\) To estimate the probability of~\(I_K(u),\) we again let~\({\cal S}_j, 1 \leq j \leq K-1\) be deterministic subsets of~\(\{1,2,\ldots,n\}\) as in~(\ref{elm_est}) and assume that the event \[J_{K-1} := \bigcap_{j=1}^{K-1} \left\{{\cal N}_j(u) = {\cal S}_j\right\}\] defined in~(\ref{elm_est_ff}) occurs. Also because the event~\(I_{K-1}(u)\) as defined in~(\ref{i_def})  also involves the event~\(Q_{K-1}(M,u)\) related to the state of \emph{all} edges having at least one endvertex in~\({\cal N}_{K-1}(u),\) we use the event
\begin{equation}\label{i_s_def}
\Lambda_{K-1}(u) =  E_{vert} \bigcap E_{short} \bigcap H_{K-1}(u) \supset I_{K-1}(u)
\end{equation}
defined in~(\ref{i_def}), for future purposes. Recalling that~\({\cal G}_{K-1}\)  is the sigma-field generated by the vertex locations and the induced subgraph of~\(\Gamma_i\) with vertex set~\(\{{\cal N}_j(u)\}_{1 \leq j \leq K-1},\) we get that the events~\(F({\cal S},\Gamma),\Lambda_{K-1}(u)\) and~\(J_{K-1}\) are all~\({\cal G}_{K-1}-\)measurable.

The advantage of using~\(\Lambda_{K-1}(u)\) is that given  the occurrence of the joint event~\[\Lambda_{K-1}(u) \cap J_{K-1}  \cap F({\cal S},\Gamma),\] we get from the estimate~(\ref{nku_est}) that any vertex~\(v \in {\cal R}_{K}(u)\) is \emph{not} adjacent to any vertex of~\({\cal N}_{K-1}(u)\) with conditional probability
\begin{equation}\label{t_aaaa}
T_v := (1-p)^{\#{\cal N}_{K-1}(u)-N_{short}(v)},
\end{equation}
where~\(N_{short}(v)\) is the number of short edges (i.e., edges of length at most~\(q_n\)), containing~\(v\) as an endvertex in~\(\Gamma_i.\) Recalling the estimate~(\ref{nku_est}) and the corresponding constant~\(C_0 > 0,\) we have that
\begin{align}\label{tal_bn}
\#{\cal N}_{K-1}(u) &\geq 2C_0 (Np)^{K-1} \nonumber\\
&= 2C_0 N^{\varepsilon (K-1)}   \nonumber\\
&= 2C_0 (nr_n^2)^{\varepsilon (K-1)}  \nonumber\\
&\geq 2C_0 n^{(1-2\beta)\varepsilon (K-1)},
\end{align}
where the first equality in~(\ref{tal_bn}) is true since~\(p = \frac{1}{N^{1-\varepsilon}}\) by~(\ref{prob_est}) and the final bound in~(\ref{tal_bn}) follows from the fact that~\(r_n \geq \frac{1}{n^{\beta}},\) by Theorem statement. The bound~(\ref{tal_bn}) is consistent since~\(\varepsilon (K-1) < 1\) by the statement prior to~(\ref{k_chce}) and so~\(N^{\varepsilon (K-1)} = o(N)\) is much smaller than order of~\(N,\) the total number of vertices in~\(\Gamma_i.\)

Since~\(\Lambda_{K-1}(u) \subset E_{short} \in {\cal G}_{K-1}\) occurs, we get from the estimate~(\ref{shortt_edges}) that there is a constant~\(D > 0\) such that
\begin{align}\label{tal_bn2}
N_{short}(v) &\leq D nq_n^2  \nonumber\\
&= Dn^{1-2c} \nonumber\\
&= o\left(n^{(1-2\beta)\varepsilon (K-1)}\right),
\end{align}
where the first equality in~(\ref{tal_bn2}) is true since~\(q_n = \frac{1}{n^{c}},\) by Theorem statement and the final estimate in~(\ref{tal_bn}) follows from our choice of~\(\varepsilon\) and~\(K\) in~(\ref{k_chce}).

Combining~(\ref{tal_bn2}) with~(\ref{tal_bn}) we get that~\(N_{short}(u)\) is much smaller than~\((Np)^{K-1}\) and so~(\ref{t_aaaa}) implies that \[T_v \leq \exp\left(-C_0 p(Np)^{K-1}\right),\] or equivalently,
\begin{align}
&\mathbb{P}\left(v \notin  {\cal N}_{K}(u) \mid {\cal G}_{K-1}\right) \ind\left(\Lambda_{K-1}(u) \bigcap F({\cal S},\Gamma)\right) \ind\left(J_{K-1}\right) \nonumber\\
&\;\;\;\;\;\leq\;\;\; \exp\left(-C_0 p(Np)^{K-1}\right)\ind\left(\Lambda_{K-1}(u)  \bigcap F({\cal S},\Gamma)\right) \ind\left(J_{K-1}\right). \nonumber
\end{align}
An application of the union bound then gives
\begin{align}
&\mathbb{P}\left(I^c_K(u) \mid {\cal G}_{K-1}\right) \ind\left(\Lambda_{K-1}(u) \bigcap F({\cal S},\Gamma)\right) \ind\left(J_{K-1}\right) \nonumber\\
&\;\;\;\;\;\leq\;\;\; n\cdot \exp\left(-C_0 p(Np)^{K-1}\right)\ind\left(\Lambda_{K-1}(u)  \bigcap F({\cal S},\Gamma)\right) \ind\left(J_{K-1}\right) \nonumber
\end{align}
and so summing  over  the sets~\(\{{\cal S}_j\}_{1 \leq j \leq K-1},\) we find that
\begin{align}
&\mathbb{P}\left(I_K^c(u) \bigcap \Lambda_{K-1}(u) \bigcap F({\cal S},\Gamma)\right)  \nonumber\\
&\;\;\;\;\;\leq\;\;\; n \exp\left(-C_0 p(Np)^{K-1}\right)\mathbb{P}\left(\Lambda_{K-1} \bigcap F({\cal S},\Gamma)\right) \nonumber\\
&\;\;\;\;\;\leq\;\;\; n \exp\left(-C_0 p(Np)^{K-1}\right)\mathbb{P}\left(F({\cal S},\Gamma)\right). \label{telly}
\end{align}

Since~\(p = \frac{1}{N^{1-\varepsilon}}\) (see~(\ref{prob_est})), we get that
\[p(Np)^{K-1} = N^{K-1} \cdot p^{K} = \left(N^{\frac{K-1}{K}} p\right)^{K} = \left(N^{\varepsilon - \frac{1}{K}}\right)^{K}.\]
By choice~\(K < \frac{1}{\varepsilon}\) strictly  (see discussion preceding~(\ref{k_chce})) and so given~\(\gamma > 0,\) we get from~(\ref{telly}) that
\begin{equation}
\mathbb{P}\left(I_K^c(u) \bigcap \Lambda_{K-1}(u) \bigcap F({\cal S},\Gamma)\right) \leq \frac{1}{n^{\Delta+\gamma+3}}\mathbb{P}\left(F({\cal S},\Gamma)\right),\label{telly2}
\end{equation}
where we recall that~\(\Delta\) is the size of the extraneous set~\({\cal Y}.\)

We now combine~(\ref{telly2}) and the iteration estimate~(\ref{h_gen_est}) and using telescoping to estimate the conditional probability of occurrence of~\(I_K(u).\) Indeed, letting~\[\mathbb{P}_{F}(.) := \mathbb{P}\left(. \mid F({\cal S},\Gamma)\right)\] be the conditional distribution, we have that
\begin{align}
\mathbb{P}_F\left(I^c_K(u)\right) &= \mathbb{P}_F\left(I^c_K(u) \cap \Lambda_{K-1}(u)\right) + \mathbb{P}_F\left(I^c_K(u) \cap \Lambda^c_{K-1}(u) \right) \nonumber\\
&\leq \frac{1}{n^{\Delta+\gamma+3}}  + \mathbb{P}_F\left(I^c_K(u) \cap \Lambda^c_{K-1}(u)\right) \nonumber\\
&\leq \frac{1}{n^{\Delta+\gamma+3}}  + \mathbb{P}_F\left(\Lambda^c_{K-1}(u)\right) \nonumber\\
&\leq \frac{1}{n^{\Delta+\gamma+3}}  + \mathbb{P}_F\left(I^c_{K-1}(u)\right)\label{eve_tits}
\end{align}
where the first inequality in~(\ref{eve_tits}) is due to~(\ref{telly2}) and the final estimate in~(\ref{eve_tits}) is true because~\(I_{K-1}(u) \subset \Lambda_{K-1}(u)\) by definition (see~(\ref{i_def})). Proceeding iteratively and using~(\ref{h_gen_est}), we then get that
\begin{equation} \label{kalam}
\mathbb{P}_F\left(I^c_{K}(u)\right) \leq \frac{K}{n^{\Delta+\gamma+3}} + \mathbb{P}_F\left(I^c_0(u)\right).
\end{equation}
By definition, we recall that we have assumed that the event~\(F({\cal S}, \Gamma)\) occurs for deterministic~\({\cal S}\) and~\(\Gamma\) that is consistent with the occurrence of the event~\(E_{vert} \cap E_{short} = I_0(u).\) Therefore the final term in~(\ref{kalam}) is zero and so
we obtain
\begin{equation}\label{h_one_est_ax}
\mathbb{P}\left(I^c_K(u) \mid F({\cal S}, \Gamma)\right) \leq \frac{K}{n^{\Delta+\gamma+3}}
\end{equation}
for each~\(u \in {\cal V}(W_i)\setminus {\cal Y} = {\cal S} \setminus {\cal Y}.\)

In words, if~\(I_K(u)\) occurs, then each vertex in~\(\Gamma_i({\cal Y})\) is at a distance of at most~\(K\) from~\(u.\) Therefore defining
\[E_{diam}(W_i) := \bigcap_{{\cal Y}} \bigcap_{u \in {\cal V}(W_i) \setminus {\cal Y}} I_K(u)\] and using the fact that there are at most~\(n\) vertices in~\({\cal V}(W_i)\) and at most~\(n^{\Delta}\) choices for~\({\cal Y},\) we get from the union bound that
\begin{equation} \nonumber
\mathbb{P}\left(E_{diam}(W_i) \mid F({\cal S}, \Gamma)\right) \geq 1-\frac{K}{n^{2+\gamma}}
\end{equation}
and averaging over the feasible values of~\(({\cal S},\Gamma),\) we have
\begin{equation} \label{salpa}
\mathbb{P}\left(E_{diam}(W_i)\right) \geq 1-\frac{K}{n^{2+\gamma}}.
\end{equation}

We recall that there are~\(T = \frac{4\zeta_n}{r_n}\) squares~\(\{W_i\}_{1 \leq i \leq T}\) in total and~\(T \leq 2n\) by~(\ref{t_est}). Therefore, defining \begin{equation}\label{e_diam_def}
E_{diam} := \bigcap_{i=1}^{T} E_{diam}(W_i),
\end{equation}
we get from~(\ref{salpa}) and the union bound that
\begin{equation}\label{e_diam_est}
\mathbb{P}(E_{diam}) \geq 1-\frac{K}{n^{1+\gamma}}.
\end{equation}
If~\(E_{diam}\) occurs, then for any~\({\cal Y}\) containing~\(\Delta\) vertices, the diameter of~\(\Gamma_i({\cal Y})\) is at most~\(K.\) This implies that~\(F_{diam}(K,\Delta)\) occurs and  since~\(\gamma > 0\) is arbitrary and~\(K\) is a constant, this obtains~(\ref{diam_est_am}) and therefore completes the proof of Step~\(2.\)

\emph{\underline{Step 3}}: In this final step, we use the events described in Steps~\((1)-(2)\) to construct a path from~\(O_1\) to~\(O_2,\) that satisfies the properties described in the Theorem statement. Formally, for constants~\(K,\Delta > 0,\) we recall the events~\(E_{tot}\) and~\(F_{diam}(K,\Delta)\) defined respectively in Steps~\(1\) and~\(2\) above and assume that the joint event~\[E_{tot} \cap F_{diam}(K,K+1)\] occurs. Given~\(\gamma > 0,\) we invoke the union bound and get from the respective probability estimates~(\ref{e_tot_est}) and~(\ref{diam_est_am}) that
\begin{equation}\label{f_joint}
\mathbb{P}\left(E_{tot} \cap F_{diam}(K,K+1)\right) = 1-\frac{2}{n^{1+\gamma}}-e^{-\theta_0 L_n}
\end{equation}
for some constant~\(\theta_0 > 0.\)

We recall the~\(\frac{r_n}{4} \times \frac{r_n}{4}\) squares~\(\{W_i\}_{1 \leq i \leq T}\) ``between" the vertices~\(O_1\) and~\(O_2\) defined prior to Step~\(1\) and identify those squares that contain a super nice edge; i.e., an edge with weight at least~\(H(N^2)\) having both its endvertices in the same square~\(W_j.\)  For convenience, we recall that~\(N = nr_n^2\) and~\(H(.)\) is the inverse ccdf as defined in~(\ref{h_def}). Because the event~\(E_{lambda} \supset E_{tot}\) described prior to~(\ref{tasker}) occurs, we know that there are at least~\(\theta T\) distinct squares of~\(\{W_i\},\) each containing a super nice edge. Let~\({\cal I} \subset \{1,2,\ldots,T\}\) be~\(\theta T\) indices such that each~\(W_j, j \in {\cal I},\) contains a super nice edge~\(h_j = (c_j,d_j)\) having both its endvertices~\(c_j\) and~\(d_j\) in~\(W_j.\)

We now ``stitch" together these super nice edges using nice edges to obtain a path of sufficiently large weight, as follows. Since~\(E_{cross} \supset E_{tot}\) occurs, for each~\(1 \leq i \leq T-1,\) there is a nice edge~\(f_i = (a_i,b_i)\) having one endvertex~\(a_i \in W_i\) and the other endvertex~\(b_i \in W_{i+1}.\) Let~\({\cal P}_1\) be a nice path  in~\(W_1\) (i.e., a path consisting of only of nice edges, each edge having both its endvertices in~\(W_1\)) of length at most~\(K,\) and having endvertices~\(x_0 := O_1 = (0,0)\) and~\(x_1 := a_1 \in W_1.\) The existence of such a path is guaranteed due to the occurrence of~\(F_{diam}(K,K+1).\) Similarly, let~\({\cal P}_T\) be a nice path of length at most~\(K\) in~\(W_T,\) having endvertices~\(x_{T-1} := b_{T-1} \in W_T\) and~\(x_T  := O_2 = (\zeta_n,0).\)

For each~\(2 \leq i \leq T-1,\) we construct a nice path~\({\cal P}_i\) of length at most~\(2K+1\) in~\(W_i,\) with endvertices~\(x_{i-1} := b_{i-1} \in W_i\) and~\(x_i := a_i \in W_i,\) as follows. If~\(i \notin {\cal I},\) then we let~\({\cal P}_i\) be any nice path of length at most~\(K\) in~\(W_i,\) having endvertices~\(x_{i-1}\) and~\(x_i.\) On the other hand, if~\(i \in {\cal I}\) then~\(W_i\) has a super nice edge~\(h_i = (c_i,d_i).\) We therefore let~\({\cal P}_{i,1}\) be a  nice path of length at most~\(K\) in~\(W_i\) having endvertices~\(x_{i-1}\) and~\(c_i.\) We then pick a nice path~\({\cal P}_{i,2}\) of length at most~\(K,\) with endvertices~\(d_i\) and~\(x_i\) and having \emph{no} vertex in common with~\({\cal P}_{i,1}.\) This is possible since~\({\cal P}_{i,1}\) has at most~\(K+1\) vertices and so the existence of~\({\cal P}_{i,2}\) is again guaranteed due to the occurrence of the event~\(F_{diam}(K,K+1).\) We then set~\[{\cal P}_i := {\cal P}_{i,1} \cup \{h_i\} \cup {\cal P}_{i,2}\] so that~\({\cal P}_i\) has length at most~\(2K+1\) and contains the super nice edge~\(h_i.\)

The above construction is illustrated in Figure~\ref{fig_path} where~\(B = W_j, j \in {\cal I},\) contains a super nice edge~\(h\) and the adjacent squares~\(A= W_{j-1}\) and~\(C = W_{j+1}\) do not contain any super nice edges. The edges~\(d,e,f\) and~\(g\) are nice cross edges that are used to stitch the paths~\(\{{\cal P}_j\}\) as described in the previous paragraph. Indeed~\({\cal P}_{j-1}\) is denoted by the curved line joining edges~\(d\) and~\(e\) and contained in the square~\(A.\) Similarly~\({\cal P}_{j+1}\) is the curved line joining~\(f\) and~\(g\) and contained within~\(C.\) The path~\({\cal P}_j = {\cal P}_{j,1} \cup \{h\} \cup {\cal P}_{j,2},\) where~\({\cal P}_{j,1}\) is the curved line joining~\(e\) and~\(h\) and~\({\cal P}_{j,2}\) is the curved line joining~\(h\) and~\(f,\) as shown in Figure~\ref{fig_path}.

\begin{figure}[tbp]
\centering
\includegraphics[width=4in, trim= 40 500 10 70, clip=true]{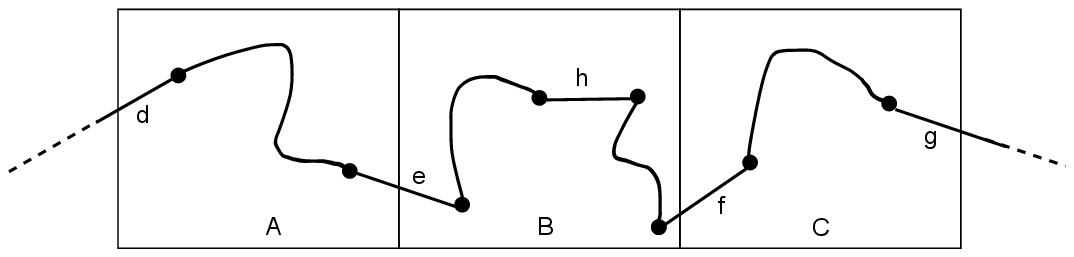}
\caption{Constructing the path~\({\cal P}_{tot}\)  using nice and super nice edges.}
\label{fig_path}
\end{figure}

By construction, the paths~\({\cal P}_i\) and~\({\cal P}_j\) are vertex disjoint if~\(|i-j| \geq 2\) and for any~\(1 \leq i \leq T-1,\) the paths~\({\cal P}_i\) and~\({\cal P}_{i+1}\) share a single vertex in common. The union
\[{\cal P}_{tot} := \bigcup_{i=1}^{T} {\cal P}_i\] is therefore a path containing~\(O_1\) and~\(O_2\) as endvertices and has the following properties:\\
\((q1)\) Each edge in~\({\cal P}_{tot}\) is nice, i.e., has length at least~\(q_n\) and weight at least~\(w_n = H(N^{1-\varepsilon}),\) where~\(N = nr_n^2,\)\\
\((q2)\) There are at least~\(\theta T\) super nice edges in~\({\cal P}_{tot}\) each having weight at least~\(H(N^2)\) and\\
\((q3)\) Each~\({\cal P}_i\) has at most~\(2K+1\) edges.\\
Property~\((q2)\) immediately implies that the total weight of the edges in~\({\cal P}_{tot}\) is at least
\[\theta T \cdot H(N^2) = \frac{4\theta \zeta_n}{r_n} H(N^2), \] where the expression~\(T = \frac{4\zeta_n}{r_n}\) follows from~(\ref{t_est}). Property~\((q3)\) ensures that the total length of~\({\cal P}_{tot}\) is at most~\(T(2K+1) = \frac{4(2K+1)\zeta_n}{r_n}.\)

Summarizing, we see that if the event~\(E_{tot} \cap F_{diam}(K,K+1)\) occurs, then
\[M_n\left(\lambda_1 L_n, q_n, w_n\right) \geq \lambda_2 L_n \alpha_n\] where~\(L_n=\frac{\zeta_n}{r_n}\) is as in Theorem statement and~\(\lambda_1 = 4(2K+1)\) and~\(\lambda_2 = 4\theta\) are constants. The probability estimate~(\ref{f_joint}) then obtains the desired deviation bound~(\ref{weight_path_low}) in Theorem statement and therefore completes the proof of the Theorem.~\(\qed\)

\renewcommand{\theequation}{\thesection.\arabic{equation}}
\setcounter{equation}{0}
\section{Proof of Theorem~\ref{thm_path2}} \label{sec_proof_b}
Let~\({\cal P}_{up}\) denote the maximum weight path in the random graph~\(G,\) containing at most~\(\lambda L = \lambda L_n = \frac{\lambda \zeta_n}{r_n}\) edges. Each edge of~\({\cal P}_{up}\) has length at most~\(r_n\) and so the path~\({\cal P}_{up}\) is necessarily contained in the~\(\lambda \zeta_n \times \lambda \zeta_n\) square~\(S(\zeta_n),\) centred at the origin. Let~\(G(\zeta_n)\) be the induced subgraph of~\(G\) with vertex set being the set of all vertices of~\(\{X_l\},\) present in~\(G(\zeta_n).\) In what follows, we therefore consider only edges located in~\(G(\zeta_n);\) i.e., edges having both endvertices in~\(G(\zeta_n).\)

The weight of~\({\cal P}_{up}\) equals~\(M_n = M_n(\lambda L_n,0,0)\) and we first consider the case~\(L_n \geq \chi_n := (\log{n})^{1+\kappa},\) where~\(\kappa > 0\) is as in Theorem statement. Indeed,  we use segmentation to obtain an upper bound for the weight~\(M_n\) as follows. Formally, let~\(x_0 > 0\) be the constant in the scaling relation~(\ref{f_scale}) and set
\begin{equation}\label{thet_def}
\theta_0 := \frac{\max(x_0,H(2))}{H(2)},
\end{equation}
where~\(H(.)\) is the inverse edge weight ccdf defined in~(\ref{h_def}) and so~\(H(2) >0\) by definition. For~\(j \geq 0,\) say that an edge~\((u,v)\) of~\(G(\zeta_n)\) is~\(j-\)\emph{bad}  if its weight \[W(u,v) \in [2\theta_0 j \alpha_n,  2\theta_0 (j+1)\alpha_n),\] where~\(\alpha_n = H(N^2)\) and~\(N = nr_n^2\) are as in Theorem statement.  If~\(N_{bad}(j,{\cal P}_{up})\) denotes the total number of~\(j-\)bad edges in~\({\cal P}_{up},\) then
\begin{equation}
M_n \leq 2\theta_0\sum_{j \geq 1} N_{bad}\left(j, {\cal P}_{up}\right) (j+1) \cdot \alpha_n. \label{mn_segment}
\end{equation}

Clearly, the number of~\(0-\)bad edges in~\({\cal P}_{up}\) is at most the total number of edges in~\({\cal P}_{up}\) and so
\begin{equation}\label{n_bad_zero}
N_{bad}\left(0, {\cal P}_{up}\right) \leq L = L_n
\end{equation}
To estimate~\(N_{bad}\left(j, {\cal P}_{up}\right)\) for~\(j \geq 1,\) we consider three separate cases depending on whether~\(j\) is ``large", ``intermediate" or ``small" as described below.

\underline{\emph{Case I} (Large~\(j\))}: Let~\[j_{up} := (L\log{n})^{2/s} = \left(\frac{\zeta_n \log{n}}{r_n}\right)^{2/s}\] and define~\(E_{up}\) to be the event that no edge of~\(G\) is~\(j-\)bad for some~\(j \geq j_{up}.\) We argue below that
\begin{equation}\label{j_up_est}
j_{up} \leq n
\end{equation}
and that~\(E_{up}\) occurs with high probability, i.e., with probability~\(1-o(1).\)

Indeed, since~\[\zeta_n \leq 1\;\; \text{ and }\;\;r_n \geq \frac{1}{n^{\beta}}\] for some~\(0 < \beta < \frac{1}{2},\) we get that
\[j_{up} \leq \left(\frac{\log{n}}{r_n}\right)^{2/s} = \left(n^{\beta} \log{n}\right)^{2/s} = o(n),\] using the fact that~\(s > 2\) strictly by Theorem statement and so~\(\frac{2\beta}{s} < \frac{2}{s} < 1.\) This proves~(\ref{j_up_est}).

To estimate the probability of occurrence of~\(E_{up},\) we first count the number of vertices located in~\(G(\zeta_n).\) Indeed, any vertex~\(X_j\) is independently located in the~\(\lambda \zeta_n \times \lambda \zeta_n\) square~\(S(\zeta_n),\) with probability at most
\[\int_{S(\zeta_n)}f \leq \epsilon_2 \lambda^2\zeta_n^2,\] by the density upper bound in~(\ref{f_eq}). Therefore if~\(N(\zeta_n)\) is the number of vertices of~\(\{X_j\}_{1 \leq j \leq n}\) located in~\(S(\zeta_n),\) then~\(N(\zeta_n)\) is stochastically dominated from above by a Binomial random variable with parameters~\(n\) and~\(\epsilon_2 \lambda^2 \zeta_n^2.\) Consequently, defining
\[E_{num} := \left\{N(\zeta_n) \leq 2\epsilon_2 \lambda^2 \zeta_n^2\right\},\] we get from the deviation estimate~(\ref{conc_est_f}) that
\begin{equation}\label{e_num_est}
\mathbb{P}(E_{num}) \geq 1-\exp\left(-D_0 n\zeta_n^2\right) \geq 1-\exp\left(-D_0 N\right),
\end{equation}
for some constant~\(D_0 > 0,\) where~\(N = nr_n^2\) is as in Theorem statement and the final estimate in~(\ref{e_num_est}) is true since~\(\zeta_n \geq r_n,\) by Theorem statement.

Let~\(E_{ball}\) be the event defined in the proof of Lemma~\ref{lem_animal} that ensures that each vertex  is adjacent to at most~\(D N\) other vertices of~\(G\) (and hence~\(G(\zeta_n)\)), where~\(D > 0\) is a constant. Defining~\[E := E_{num} \cap E_{ball},\] we get from the respective estimates~(\ref{e_num_est}) and~(\ref{e_ball_est}) that
\begin{equation}\label{e_ax_est_ax}
\mathbb{P}(E) \geq 1-e^{-D_0N}-e^{-C_1N } \geq 1-e^{-D_1N}
\end{equation}
for all~\(n\) large and some constant~\(D_1 > 0,\) since~\(N = nr_n^2 \geq n^{1-2\beta} \rightarrow \infty,\) by Theorem statement. Thus~\(E\) occurs with high probability and we let~\[\mathbb{P}_E(.) := \mathbb{P}\left(. \mid E\right)\] be the probability distribution conditioned on the occurrence of~\(E.\)

By standard handshaking arguments, we know that the sum of vertex degrees is twice the number of edges in any graph and  so given that~\(E\) occurs, the number of edges in~\(G(\zeta_n)\) is at most~\(D_2 \zeta_n^2nN,\) for some constant~\(D_2 > 0.\) Any~\(j-\)bad edge of~\(G\) has weight at least~\(2\theta_0j\alpha_n\) and so recalling that~\(F_c(.)\) denotes the edge weight ccdf (see discussion prior to~(\ref{h_def})), we get that any edge of~\(G(\zeta_n)\) is~\(j-\)bad with probability at most~\(F_c(2\theta_0j\alpha_n).\)

By definition (see~(\ref{thet_def})), we have that
\[\theta_0\alpha_n \geq \frac{x_0H(N^2)}{H(2)} \geq x_0\] for all~\(n\) large, since~\(N = nr_n^2 \geq n^{1-2\beta} \rightarrow \infty\) by Theorem statement and~\(H(.)\) is non-decreasing by definition. Therefore  using the scaling relation~(\ref{f_scale}),  we get that any edge of~\(G(\zeta_n)\) is~\(j-\)bad with probability
\begin{equation}
F_c(2\theta_0j\alpha_n)  \leq \frac{C_0}{j^{s}} \cdot F_c\left(2\theta_0\alpha_n\right) \nonumber
\end{equation}
Again using  the definition~(\ref{thet_def}), we see that~\(\theta_0 \geq 1\) and so we have that
\begin{equation}
F_c(2\theta_0\alpha_n) \leq F_c\left(2\alpha_n\right) = F_c(2H(N^2)) \leq \frac{1}{N^2}, \nonumber
\end{equation}
by the definition of~\(H(.)\) in~(\ref{h_def}).
Combining the above two estimates we have
\begin{equation}
F_c(2\theta_0j\alpha_n) \leq \frac{C_0}{j^s \cdot N^2}. \label{a_one}
\end{equation}

The estimate~(\ref{a_one}) implies that any edge of~\(G(\zeta_n)\) is~\(j-\)bad for some~\(j \geq j_{up} = \left(\frac{\zeta_n \log{n}}{r_n}\right)^{2/s}\) with probability at most
\[\frac{C_0}{j_{up}^s \cdot N^2} = \frac{C_0 r_n^2}{\zeta_n^2 (\log{n})^2 N^2} = \frac{C_0 }{(\log{n})^2 \zeta_n^2 nN}.\] We recall that the total number of edges in~\(G(\zeta_n)\) is at most~\(D_2 \zeta_n^2 nN\) for some constant~\(D_2 > 0\) and so recalling that~\(E_{up}\) is the event that no edge of~\(G(\zeta_n)\) is~\(j-\)bad for some~\(j \geq j_{up},\)  we get from the union bound that
\begin{equation}
\mathbb{P}_E\left(E^c_{up}\right) \leq \frac{C_0D_2}{(\log{n})^2}. \label{e_up_est_ax}
\end{equation}
Combining~(\ref{e_up_est_ax}) with the estimate~(\ref{e_ax_est_ax}) for~\(E\) and using the relation~(\ref{pab_rel_two})
\[\mathbb{P}(F) \leq \mathbb{P}_A(F) + \mathbb{P}(A^c)\]
with~\(F = E_{up}\) and~\(A = E,\) we get that
\begin{equation}\label{e_up_est}
\mathbb{P}(E_{up}^c) \leq \frac{C_0D_2}{ (\log{n})^2} + e^{-D_1 N} \leq \frac{2C_0D_2}{(\log{n})^2},
\end{equation}
for all~\(n\) large, since~\(N = nr_n^2 \geq n^{1-2\beta} \rightarrow \infty,\) by Theorem statement.

Thus~\(E_{up}\) occurs with high probability and if~\(E_{up}\) occurs, then the maximum weight path~\({\cal P}_{up}\) has no~\(j-\)bad edges for~\(j \geq j_{up}.\) This completes the analysis for the Case~\((I).\)

\underline{\emph{Case II} (Intermediate~\(j\))}:  In this case, we recall that~\(\chi_n = (\log{n})^{1+\kappa}\) where~\(\kappa > 0\) is the constant in Theorem statement and set
\begin{equation}\label{j_low_def}
j_{low} := \left(\frac{L}{\chi_n}\right)^{2/s} = \left(\frac{\zeta_n}{r_n \chi_n}\right)^{2/s}
\end{equation}
and estimate the \emph{total} number~\(\sum_{j=j_{low}}^{j_{up}}N_{bad}\left(j, {\cal P}_{up}\right)\) of~\(j-\)bad edges in the maximum weight path~\({\cal P}_{up}\) for~\(j_{low} \leq j \leq j_{up} = \left(L\log{n}\right)^{2/s}.\) We have that~\(j_{low} \geq 1\) since~\(L \geq \chi_n\) by choice.

We use the direct upper bound
\begin{equation} \nonumber
N_{bad}\left(j, {\cal P}_{up}\right) \leq N_{bad}\left(j, G(\zeta_n)\right),
\end{equation}
the total number of~\(j-\)bad edges in~\(G(\zeta_n).\) To estimate~\(N_{bad}(j,G(\zeta_n)),\) suppose as before that the event~\(E\) defined in case~\((I)\) above occurs and let~\(\mathbb{P}_E(.) := \mathbb{P}\left(. \mid E\right)\) be the probability distribution conditioned on the occurrence of~\(E.\) From the discussion prior to~(\ref{a_one}), we know that the number of edges in~\(G(\zeta_n)\) is at most~\(D \zeta_n^2 nN,\) for some constant~\(D > 0\) and from~(\ref{a_one}), we get that each edge has weight at least~\(2\theta_0 j_{low} \alpha_n\) (and hence~\(j-\)bad for some~\(j \geq j_{low}\)) with probability at most
\begin{equation}\label{p_mid_def}
\frac{C_0}{j_{low}^s \cdot N^2} = \frac{C_0r_n^2 \chi_n^2}{\lambda^2\zeta_n^2N^2}  = \frac{C_1 \chi_n^2}{\zeta_n^2\cdot nN} =:p_{mid},
\end{equation}
for some constant~\(C_1 > 0,\) since~\(N = nr_n^2.\) All constants henceforth do not depend on~\(M.\)

Consequently,~\[N_{low} := \sum_{j=j_{low}}^{j_{up}} N_{bad}(j,G(\zeta_n))\] is stochastically dominated from above by a Binomial random variable with parameters~\(D\zeta_n^2 nN\) and~\(p_{mid}\) and so
\begin{align}
\mathbb{E}_E\left(\exp\left(N_{low}\right) \right) &\leq \left(1-p_{mid} + ep_{mid}\right)^{D\zeta_n^2nN} \nonumber\\
&\leq \exp\left((e-1)D\zeta_n^2nN p_{mid}\right) \nonumber\\
&= \exp\left(D_2\chi_n^2\right),
\end{align}
for some constant~\(D_2 > 0,\) by~(\ref{p_mid_def}). Standard Chernoff bound then implies that
\begin{align}
\mathbb{P}_E\left(N_{low} \geq 2D_2 \chi_n^2\right) &\leq \exp\left(-D_2 \chi_n^2\right). \label{monica}
\end{align}

Setting
\[E_{mid} := \left\{N_{low} \leq 2D_2 \chi_n^2\right\}\] and combining~(\ref{monica}) with the estimate~(\ref{e_ax_est_ax}) for the event~\(E,\) we argue as in the derivation of~(\ref{e_up_est}) to get that
\begin{equation}\label{e_mid_est}
\mathbb{P}\left(E_{mid}\right) \leq 2\exp\left(-D_2 \chi_n^2\right),
\end{equation}
for all~\(n\) large. In other words, with high probability the number of~\(j-\)bad edges in~\(G(\zeta_n)\) for some~\(j \geq j_{low}\) (and therefore in~\({\cal P}_{up}\)) is at most~\(2D_2\chi_n^2 = 2D_2(\log{n})^{2+2\kappa}.\)

This completes the analysis of Case~\((II).\)

\underline{\emph{Case III} (Small~\(j\))}: We now consider the case~\(j \leq j_{low} = \left(\frac{L}{\chi_n}\right)^{2/s}.\)

To estimate~\(N_{bad}\left(j, {\cal P}_{up}\right)\) for~\(1 \leq j\leq j_{low},\) we use a ``multi-level" scaling argument similar to proof of Theorem~\(1\) in~\cite{kesten}. For~\(4 \leq t  = t(j,n) \leq \frac{1}{100r_n}\) to be determined later, split the unit square~\(S\) into disjoint~\(2tr_n \times 2tr_n\) squares~\(\{R_l\}_{1 \leq l \leq T_t}, T_t = \frac{1}{4t^2r_n^2},\) as described prior to the statement of Lemma~\ref{lem_animal}. Such a tiling allows for ``covering" the~\(\lambda L\) edges of the maximum weight path~\({\cal P}_{up}\) using~\(t-\)animals of size much smaller than~\(L\) and thereby facilitates the use of the deviation bound~(\ref{tom_tom}) derived in Lemma~\ref{lem_animal}.

As a first step, we show that there is a constant~\(D_{\lambda} > 0\) depending only on~\(\lambda\) and a~\(t-\)animal~\({\cal A}\) containing at most
\begin{equation}\label{q_choice}
Q := \frac{D_{\lambda} L}{t}
\end{equation}
~\(t-\)squares, with the property that every vertex of~\({\cal P}_{up}\) is contained in some square of~\({\cal A}.\) To see why this is true, we represent the path~\({\cal P}_{up}\) as~\({\cal P}_{up}  = (v_0,v_1,\ldots,v_P)\) containing~\(P \leq \lambda L\) edges~\((v_i,v_{i+1}), 0 \leq i \leq P-1.\) Let~\(P_t\) denote the integer part of~\(\frac{P}{t}\) and for~\(0 \leq i \leq P_t,\) let~\(Y_i \in \{R_l\}\) be the random~\(t-\)square containing the vertex~\(v_{it}.\)

If~\(v_m, it \leq m < (i+1)t,\) is an ``intermediate" vertex of~\({\cal P}_{up},\) then the Euclidean distance between~\(v_m\) and~\(v_{it}\) is at most~\(tr_n,\) since the length of any edge of~\(G(\zeta_n)\) is at most~\(r_n.\) This necessarily implies that either~\(v_m\) belongs to~\(Y_i\) or some~\(t-\)square sharing a corner with~\(Y_i.\) Therefore denoting~\({\cal U}(Y_i)\) to be the set of all~\(t-\)squares sharing a corner with~\(Y_i,\) including~\(Y_i,\) we see that every vertex of the path~\({\cal P}_{up}\) is contained in some~\(t-\)square of
\[{\cal A} := \bigcup_{0 \leq i \leq P_t} \left\{ {\cal U}(Y_i) \right\}.\]

The~\(t-\)squares in~\({\cal A}\) necessarily form a~\(t-\)animal since otherwise, there would be two vertices that are adjacent in~\({\cal P}_{up}\) (and hence in~\(G(\zeta_n)\)) but at a distance of at least~\(tr_n \geq 4r_n\) apart, since~\(t \geq 4\) by choice (see~(\ref{t_chic})). This leads to a contradiction since the length of any edge in~\(G(\zeta_n)\) is strictly less than~\(r_n.\) Also, for any~\(t-\)square~\(Y_i, 0 \leq i \leq P_t,\) there are at most~\(9\)~\(t-\)squares in~\(\{R_l\}\) sharing a corner with some~\(Y_i\) and so the size of the~\(t-\)animal~\({\cal A}\) is at most~\(10(P_t+1).\)

Recalling that~\(P_t\) is the integer part of~\(\frac{P}{t}\) and setting~\(C_{\lambda} := \lambda+\frac{1}{100},\) we see that
\begin{equation}\label{p_t_est}
P_t+1 \leq \frac{P}{t} + 1 \leq \frac{\lambda L}{t}+1 \leq \frac{C_{\lambda} L}{t},
\end{equation}
provided~\[t \leq (C_{\lambda}-\lambda)L = (C_{\lambda}-\lambda)\frac{\zeta_n}{r_n} = \frac{\zeta_n}{100r_n}.\] Since~\((\zeta_n,0)\) is a point of the unit square, we have that~\(0 < \zeta_n < 1\) and so combining~(\ref{p_t_est})  with the bounds for~\(t\) described in~(\ref{t_chic}), we see that our future choice of~\(t = t(j,n)\) must satisfy
\begin{equation}\label{t_choice_ax}
4 \leq t \leq \frac{\min(1,\zeta_n)}{100r_n} = \frac{\zeta_n}{100r_n} = \frac{L}{100}.
\end{equation}
For such a~\(t\), we also get from~(\ref{p_t_est}) that the size of~\({\cal A}, \) i.e., the number of~\(t-\)squares in~\({\cal A},\) is at most
\begin{equation}\nonumber
10(P_t+1) \leq \frac{10C_{\lambda}L}{t} = Q.
\end{equation}
This obtains the desired~\(t-\)animal~\({\cal A}\) as described in the paragraph containing~(\ref{q_choice}).

We now use the~\(t-\)animal~\({\cal A}\) to estimate the number of~\(j-\)bad edges~\(N_{bad}\left(j, {\cal P}_{up}\right)\) in~\({\cal P}_{up}\) by identifying~\(j-\)bad edges located in some~\(t-\)square of~\({\cal A};\) i.e.,~\(j-\)bad edges with both endvertices located in the \emph{same}~\(t-\)square of~\({\cal A}.\) Before we do so, we have the following remark. It might happen that some edge of~\({\cal P}_{up}\) has one endvertex in a~\(t-\)square~\(Y_i \in {\cal A}\) and the other endvertex in a \emph{different}~\(t-\)square~\(Y_j , j\neq i.\) To count such ``rogue" edges,   we perform an additional tiling of the unit square~\(S\) by shifting the centre of each~\(t-\)square in~\(\{R_l\}\) by~\((tr_n,tr_n).\) The same argument as above yields a shifted~\(t-\)animal~\({\cal A}_{shift}\) containing at most~\(Q = \frac{D_{\lambda} L}{t}\) shifted~\(t-\)squares such that every vertex of~\({\cal P}_{up}\) is contained in some shifted~\(t-\)square of~\({\cal A}_{shift}.\)

The advantage of including the shifted tiling is that any~\(t-\)square (shifted or not) has side length at least~\(t r_n \geq 4r_n,\) by our choice of~\(t\) (see~(\ref{t_choice_ax})). Consequently, every edge of~\({\cal P}_{up}\) is located either in some~\(t-\)square of~\({\cal A}\) or in some shifted~\(t-\)square of~\({\cal A}_{shift}.\)  Letting~\(N_{bad}\left(j, R_l\right)\) denote the total number of~\(j-\)bad edges located in the~\(t-\)square~\(R_l\) and setting~\[N_{bad}\left(j, {\cal A}\right) := \sum_{Z_i \in {\cal A}} N_{bad}\left(j, Z_i\right)\] to be the total number of~\(j-\)bad edges located in the~\(t-\)squares of~\({\cal A},\) we then get that
\begin{equation}\label{nj_bad_up}
N_{bad}\left(j, {\cal P}_{up}\right) \leq N_{bad}\left(j , {\cal A}\right) + N_{bad}\left(j , {\cal A}_{shift}\right).
\end{equation}

In what follows, we upper bound~\(N_{bad}\left(j , {\cal A}\right)\) and an analogous analysis holds for~\(N_{bad}\left(j , {\cal A}_{shift}\right)\) as well. To estimate~\(N_{bad}\left(j , {\cal A}\right),\) we appeal to Lemma~\ref{lem_animal} in Section~\ref{sec_prelim}. Indeed, by definition~\(N_{bad}\left(j, {\cal A}\right)\) is the number of~\(j-\)bad edges located in the~\(t-\)squares of the random~\(t-\)animal~\({\cal A}.\) Each~\(j-\)bad edge has weight at least~\(2\theta_0j\alpha_n,\) where we recall that~\(\theta_0 > 0\) is the constant in~(\ref{thet_def}),~\(\alpha_n = H(N^2),\;N = nr_n^2\) are as in Theorem statement and~\(H(.)\) is the inverse ccdf defined in~(\ref{h_def}). Consequently, the definition of~\(N_{edge}(.)\) prior to Lemma~\ref{lem_animal} implies that
\[N_{bad}\left(j , {\cal A}\right) \leq  N_{edge}\left(2\theta_0j\alpha_n, {\cal A}\right)\] and so setting~\(Q = \frac{D_{\lambda} L}{t},\) we use the estimate~(\ref{tom_tom}) and get for~\(y > 0\) that
\begin{equation}
\mathbb{P}\left(N_{bad}\left(j,{\cal A}\right) \geq y\right) \leq e^{\delta_1 Q \Lambda} \cdot e^{-y} + e^{-\delta_2 N}, \label{tom_tom_two}
\end{equation}
where~\(\Lambda = \max(1,t^2N^2F_c(2\theta_0j\alpha_n))\) and~\(\delta_1,\delta_2 > 0\) are constants.

From the estimate~(\ref{a_one}), we know that~\[F_c(2\theta_0j\alpha_n) \leq \frac{C_0}{j^s N^2},\] where~\(C_0 > 0\) is the constant in the scaling relation~(\ref{f_scale}). Thus the term~\(\Lambda\) in~(\ref{tom_tom_two}) is bounded above as
\[\Lambda = \max\left(1,t^2N^2F_c(2\theta_0j \alpha_n)\right) \leq \max\left(1,\frac{t^2C_0}{j^{s}}\right).\] We now set~\[t := \delta \cdot j^{s/2}\] where~\(\delta > 0\) is a large constant and see that condition~(\ref{t_choice_ax}) is satisfied since \[t = \delta \cdot j^{s/2} \leq \delta \cdot j_{low}^{s/2} = \frac{\delta L}{\chi_n} = \frac{\delta L}{(\log{n})^{1+\kappa}} \leq \frac{L}{100}\] for all~\(n\) large.  Thus
\begin{equation}\label{lambda_est}
\Lambda \leq \max(1,\delta^2C_0) \leq \delta^2 C_0,
\end{equation}
provided~\(\delta > 0\) is large enough. Fixing such a~\(\delta,\) we also get that
\begin{equation}\label{q_est}
Q = \frac{D_{\lambda} L}{t} = \frac{D_1 L}{j^{s/2}},
\end{equation}
where~\(D_{\lambda},D_1 > 0\) are constants.

Plugging~(\ref{lambda_est}) and~(\ref{q_est}) into~(\ref{tom_tom_two}), we get that
\begin{equation}
\mathbb{P}\left(N_{bad}\left(j,{\cal A}\right) \geq y\right) \leq \exp\left(\frac{D_2 L}{j^{s/2}}\right) \cdot e^{-y} + e^{-\delta_2 N}, \nonumber
\end{equation}
for some constant~\(D_2 > 0.\) Setting~\(y = \frac{2D_2 L}{j^{s/2}},\) we further have
\begin{equation}
\mathbb{P}\left(N_{bad}\left(j,{\cal A}\right) \geq \frac{2D_2 L}{j^{s/2}}\right) \leq \exp\left(-\frac{D_2 L}{j^{s/2}}\right) + e^{-\delta_2 N}. \label{tom_tom_three}
\end{equation}
Recalling that~\(j \leq j_{low} = \left(\frac{L}{\chi_n}\right)^{2/s},\) we see that~\[\frac{L}{j^{s/2}} \geq \chi_n = (\log{n})^{1+\kappa},\] where~\(\kappa > 0\) is the constant in Theorem statement. Also~\(N = nr_n^2 \geq n^{1-2\beta} \rightarrow \infty\) by Theorem statement and so we get from~(\ref{tom_tom_three}) that
\begin{equation}
\mathbb{P}\left(N_{bad}\left(j,{\cal A}\right) \geq \frac{2D_2 L}{j^{s/2}}\right) \leq 2\exp\left(-D_2 \chi_n\right) , \label{tom_tom_four}
\end{equation}
for all~\(n\) large and each~\(1 \leq j \leq j_{low}.\)

The estimate~(\ref{tom_tom_four}) obtains an upper bound for the number of~\(j-\)bad edges located in the~\(t-\)squares of the random~\(t-\)animal~\({\cal A}.\) For the shifted~\(t-\)animal~\({\cal A}_{shift},\) we use the same analysis as above. Indeed, the proof of the Lemma~\ref{lem_animal} holds for the shifted tiling as well and so choosing the constant~\(D_2 > 0\) larger and~\(D_4 > 0\) smaller, if necessary, the estimate~(\ref{tom_tom_four}) is valid for~\({\cal A}_{shift}.\) Consequently, using the upper bound~(\ref{nj_bad_up}) and the union bound, we get that
\begin{equation}
\mathbb{P}\left(N_{bad}\left(j,{\cal P}_{up}\right) \geq \frac{4D_2 L}{j^{s/2}}\right) \leq 4\exp\left(-D_2 \chi_n\right), \label{tom_tom_five}
\end{equation}
for each~\(1 \leq j \leq j_{low}.\) Further defining
\[E_{low} := \bigcap_{j=1}^{j_{low}} \left\{N_{bad}\left(j,{\cal P}_{up}\right) \leq \frac{4D_2 L}{j^{s/2}}\right\},\] another application of the union bound gives
\begin{align}
\mathbb{P}(E_{low}) &\geq 1- 4j_{low} \cdot e^{-D_2 \chi_n} \nonumber\\
&\geq 1- 4n \cdot e^{-D_2\chi_n},\label{e_low_est}
\end{align}
since~\(j_{low} \leq j_{up} \leq n,\) by~(\ref{j_up_est}). Since~\(\chi_n = (\log{n})^{1+\kappa},\) we see that~\(\mathbb{P}(E_{low})= 1-o(1)\) and this completes the analysis of the Case~\((III).\)

Finally, setting
\[E_{range} := E_{low} \bigcap E_{mid} \bigcap E_{up}\] and using the corresponding estimates~(\ref{e_low_est}),~(\ref{e_mid_est}) and~(\ref{e_up_est}), we get from the union bound that
\begin{equation}
\mathbb{P}(E_{range}) = 1-o(1). \label{e_range_est}
\end{equation}
If~\(E_{range}\) occurs, then from the corresponding definitions of the events~\(E_{low}\) and~\(E_{up},\) we see that there is a constant~\(D_{up} > 0\) such that
\begin{equation}\label{nj_badappa}
N_{bad}\left(j, {\cal P}_{up}\right)
\leq
\left\{
\begin{array}{ll}
\frac{D_{up} L}{j^{s/2}},  &\;\;\;1 \leq j \leq j_{low}\\
&\\
0, &\;\;\;j \geq j_{up}.
\end{array}
\right.
\end{equation}
Moreover, choosing~\(D_{up}\) larger if necessary, the event~\(E_{mid}\) ensures that~\[\sum_{j=j_{low}}^{j_{up}} N_{bad}(j,{\cal P}_{up}) \leq D_{up}\chi_n^2.\]

Substituting the above bounds  into the upper bound~(\ref{mn_segment}), we obtain
\begin{equation}\nonumber
M_n \leq 2\theta_0D_{up}\alpha_n \sum_{j \geq 1} \frac{L}{j^{s/2}} \cdot (j+1)  + 2\theta_0D_{up} \chi_n^2 \cdot j_{up} \cdot \alpha_n.
\end{equation}
Since~\(s > 4\) strictly by Theorem statement, we see that~\(\sum_{j \geq 1} \frac{j+1}{j^{s/2}} < \infty\) and  from the definition~(\ref{j_up_est}) we know that~\(j_{up} = \left(L \log{n}\right)^{2/s}.\) Also recalling that~\(L = \frac{\zeta_n}{r_n},\) we get that if~\(E_{range}\) occurs, then
\[M_n \leq D_{fin} L \alpha_n \left(1+ \nu_n\right)\] for some constant~\(D_{fin} > 0,\) where~\[\nu_n = \frac{\chi_n^2 (\log{n})^{2/s}}{L^{1-2/s}} = \frac{(\log{n})^{2+2\kappa+2/s}}{L^{1-2/s}},\] is as in Theorem statement. The estimate~(\ref{e_range_est}) for the event~\(E_{range}\) then obtains the desired upper bound for the maximum weight~\(M_n\) in the Theorem statement, for the case~\(L_n \geq \chi_n = (\log{n})^{1+\kappa}.\)

For the case~\(L_n \leq \chi_n,\) we let~\(\Lambda(\zeta_n)\) be the maximum weight of an edge of the graph~\(G(\zeta_n)\) and simply use the upper bound
\[M_n \leq \lambda L_n \cdot \Lambda(\zeta_n). \] To estimate~\(\Lambda(\zeta_n),\) we recall that if the event~\(E_{up}\) defined prior to~(\ref{e_up_est_ax}) holds, then~\[\Lambda(\zeta_n) \leq D \cdot j_{up} \cdot \alpha_n = D\alpha_n L^{2/s} \cdot (\log{n})^{2/s} \] for some constant~\(D > 0.\) To estimate the probability of the event~\(E_{up},\) we see that instead of the bound~(\ref{a_one}), we have
\begin{equation}
F_c(2\theta_0j\alpha_n) \leq \frac{C_0 \chi_n}{j^s \cdot N^2} \label{a_one_2}
\end{equation}
and so arguing as in the derivation of~(\ref{e_up_est}), we have
\begin{equation}\label{e_up_est_22}
\mathbb{P}(E_{up}^c) \leq \frac{D\chi_n}{(\log{n})^2} = \frac{D(\log{n})^{1+\kappa}}{(\log{n})^2},
\end{equation}
for some constant~\(D>  0.\) Choosing~\(0 < \kappa < 1,\) we obtain the desired upper bound for~\(M_n\) in Theorem statement for the case~\(L_n \leq \chi_n\)  and this completes the proof of the Theorem.~\(\qed\)

\renewcommand{\theequation}{\thesection.\arabic{equation}}
\setcounter{equation}{0}
\section{Proof of Corollary~\ref{cor_one}} \label{sec_proof_cor}
If~\(\varepsilon > 0\) is small enough, then the condition~\(q_n = r_n^{1+\varepsilon}\) in~(\ref{nice_cond_ax}) together with the estimate~(\ref{weight_path_low}) in Theorem~\ref{thm_path} implies the lower bound in~(\ref{dev_complete}). Similarly, using~\(L_n \geq (\log{n})^{b}\) for some~\(b > \frac{2(s+1)}{s-2}\)  we get that the term~\(\nu_n\) in the statement of Theorem~\ref{thm_path2} satisfies
\begin{align}
\nu_n &= \frac{(\log{n})^{2+2\kappa+2/s}}{L_n^{1-2/s}}  \nonumber\\
&\leq \frac{(\log{n})^{2+2\kappa+2/s}}{(\log{n})^{b(1-2/s)}} \nonumber\\
&= o(1), \nonumber
\end{align}
provided we set~\(\kappa  = \kappa(b)> 0\) small enough. Therefore the estimate~(\ref{weight_path_up}) in Theorem~\ref{thm_path2}  obtains the upper bound in~(\ref{dev_complete}). This completes the proof of part~\((a)\) of Corollary~\ref{cor_one}.

For the power law decay case in part~\((b),\) we use the inequalities in~(\ref{f_power}) to  see that
\[F_c(ax) \leq \frac{a_2}{a^sx^s} \leq \frac{a_2/a_1}{a^s} \cdot F_c(x)\] for all~\(x\) large. Thus the scaling condition~(\ref{f_scale}) holds. Moreover, there are constants~\(D_1,D_2 > 0\) such that
\[D_1 z^{1/s} \leq H(z) \leq D_2 z^{1/s}\] for all~\(z\) large. Thus~\[w_n = H(N^{1-\varepsilon}) \geq C_1 g_n^{1-\varepsilon}\] for some constant~\(C_1 > 0,\) where~\(g_n = (nr_n^2)^{1/s}\) is as in Theorem statement. Similarly~\(\alpha_n = H(N^2)\) satisfies~\(C_2 g_n^2 \leq \alpha_n \leq C_3 g_n^2\) for some constants~\(C_2,C_3 > 0.\) The estimate~(\ref{dev_complete}) then obtains~(\ref{example_one}) and this completes the proof of part~\((b)\) of Corollary~\ref{cor_one}.

Finally, for the exponential decay case, we see that the edge weight ccdf~\(F_c\) is continuous by choice and we now demonstrate that the scaling relation~(\ref{f_scale}) holds. Indeed for~\(x > 0, a > 1\)  and~\(s > 0\) we have that
\begin{align}
F_c(ax) &= \exp\left(-a^{\lambda} x^{\lambda}\right)  \nonumber\\
&= \exp\left(-(a^{\lambda}-1)\right) \cdot \exp\left(-x^{\lambda}\right) \nonumber\\
&\leq \frac{C}{a^{s}} \cdot \exp\left(-x^{\lambda}\right),
\end{align}
where~\(C = C(\lambda,a,s) > 0\) is a large constant. Thus~(\ref{f_scale}) is satisfied. To apply Theorems~\ref{thm_path}-~\ref{thm_path2}, we evaluate the inverse ccdf~\(H(z)\) as
\[H(z) = \left(\log{z}\right)^{1/\lambda}\] and since~\(\log{N} = \log(nr_n^2)\) is of the order of~\(\log{n},\) we also deduce that the term~\(\alpha_n = H(N^2)\) and~\(w_n = H(N^{1-\varepsilon})\) satisfy~\[D_1 h_n \leq w_n \leq \alpha_n \leq D_2 h_n\] for some constants~\(D_1,D_2 > 0,\) where~\(h_n =(\log{n})^{1/\lambda}\) is as in Corollary statement. Since the edge weights have bounded~\(s^{th}\) moment for any~\(s > 0,\) given~\(b > 2,\) we choose~\(s > 0\) large enough so that~\(b > \frac{2(s+1)}{s-2}.\) The bound~(\ref{dev_complete}) then implies~(\ref{example_two}) and this completes the proof of the Corollary.~\(\qed\)

\subsection*{\em Data Availability Statement}
Data sharing not applicable to this article as no datasets were generated or analysed during the current study.

\subsection*{\em Acknowledgement}
I thank Professors Rahul Roy, Thomas Mountford and Federico Camia for crucial comments that led to an improvement of the paper. I also thank IISER Bhopal, IMSc and University of Bristol for my fellowships.

\subsection*{\em Conflict of Interest and Funding Statement}
I certify that there is no actual or potential conflict of interest in relation to this article. No funding was received in the preparation of this manuscript.

\bibliographystyle{plain}

\begin{thebibliography}{10}



\bibitem{alon} N. Alon and J. Spencer. (2008).
\newblock{\em The Probabilistic Method}.
\newblock{Wiley Interscience}.

\bibitem{castro} Q. Duchemin and Y. De Castro. (2023).
\newblock{Random Geometric Graph: Some Recent Developments and Perspectives}.
\newblock{\em In: R. Adamczak, N. Gozlan, K. Lounici, M. Madiman (eds) High Dimensional Probability IX. Progress in Probability}, \textbf{80}, Birkh\"auser, 347--392.


\bibitem{kesten} J. T. Cox, A. Gandolfi, P. S. Griffin and  H. Kesten. (1993).
\newblock{Greedy Lattice Animals~\(I:\) Upper Bounds}.
\newblock{\em Annals of Applied Probability}, \textbf{3}, 1151--1169.

\bibitem{ganesan} G. Ganesan. (2025).
\newblock{Deviation Estimates for Extremal Relay Random Geometric Graphs}.
\newblock{\em Indian Journal of Pure and Applied Mathematics}, https://doi.org/10.1007/s13226-025-00780-y.

\bibitem{ganesan2} G. Ganesan. (2025).
\newblock{Minimum Spanning Trees of Random Geometric Graphs with Independent Edge Weights}.
\newblock{\em Sankhya A}, https://doi.org/10.1007/s13171-025-00423-8.

\bibitem{goldsmith} A. Goldsmith. (2005).
\newblock{\em Wireless Communications}.
\newblock{Cambridge University Press}.

\bibitem{grimmett} G. Grimmett. (1989).
\newblock{\em Percolation}.
\newblock{Springer}.


\bibitem{gupta} P. Gupta and P. R. Kumar. (1998).
\newblock {Critical Power for Asymptotic Connectivity in Wireless Networks}.
\newblock {\em Stochastic Analysis, Control, Optimization and Applications}, pp. 2203--2214.

\bibitem{hirsch} C. Hirsch, D. Neuhauser, C. Gloaguen and V. Schmidt. (2015).
\newblock{First-passage Percolation on Random Geometric Graphs and an Application to Shortest-path Trees}.
\newblock{\em Advances in Applied Probability}, \textbf{47}, 328--354.

\bibitem{lima} L. R. de Lima and D. Valesin. (2025).
\newblock{Speed of Convergence and Moderate Deviations of FPP on Random Geometric Graphs}.
\newblock{\em Stochastic Processes and their Applications}, \textbf{187}, 104652.


\bibitem{penrose} M. Penrose. (2003).
\newblock{\em Random Geometric Graphs}.
\newblock{Oxford University Press}.


\bibitem{penrose2} M. Penrose. (2016).
\newblock{Connectivity of Soft Random Geometric Graphs}.
\newblock{\em Annals of Applied Probability}, \textbf{26}, pp. 986--1028.








\end{thebibliography}

\end{document}